\def \R {\mathbb{R}}
\def \E {\mathbb{E}}
\def \L {\mathbb L}
 \def \H {\mathbb H}
  \def\N{{\mathbb N}}
\theoremstyle{thmstyletwo}%
\newtheorem{theorem}{Theorem}
\newtheorem{proposition}[theorem]{Proposition}%
\newtheorem{remark}{Remark}%
\newtheorem{lem}{Lemma}
\numberwithin{equation}{section}
\begin{document}

\DOI{DOI HERE}
\copyrightyear{2021}
\vol{00}
\pubyear{2021}
\access{Advance Access Publication Date: Day Month Year}
\appnotes{Paper}
\copyrightstatement{Published by Oxford University Press on behalf of the Institute of Mathematics and its Applications. All rights reserved.}
\firstpage{1}


\title[Internal null controllability for the 1D-heat equation with DBC]{Internal null controllability for the one-dimensional heat equation with dynamic boundary conditions}

\author{El Mustapha Ait Ben Hassi, Mariem Jakhoukh, and Lahcen Maniar
\address{\orgdiv{Department of mathematics}, \orgname{Cadi Ayyad University, Faculty of Sciences Semlalia, LMDP-UMISCO}, \orgaddress{\street{B.P. 2390}, \postcode{40000}, \state{Marrakesh}, \country{Morocco}}}}
\author{Walid Zouhair* \ORCID{0000-0002-5939-2737}
\address{\orgdiv{Department of Mathematics}, \orgname{Ibn Zohr University, Faculty of Applied Sciences Ait Melloul}, \orgaddress{ \postcode{B.P. 6146}, \state{Agadir}, \country{Morocco}}}
\address{\it \Large Dedicated to the memory of Professor Hammadi Bouslous}}

\corresp[*]{Corresponding author: \href{walid.zouhair.fssm@gmail.com}{walid.zouhair.fssm@gmail.com}}

\authormark{E. M. Ait Ben Hassi, M. Jakhoukh, L. Maniar and W. Zouhair}

\received{Date}{0}{Year}
\revised{Date}{0}{Year}
\accepted{Date}{0}{Year}


\abstract{The primary focus of this paper is to establish the internal null controllability for the one-dimensional heat equation featuring dynamic boundary conditions. This achievement is realized by introducing a new Carleman estimate and an observability inequality for the corresponding backward system. In conclusion, the paper includes a set of numerical experiments that serve to confirm the validity of the theoretical findings and underscore the effectiveness of the designed control with a minimal $L^2$-norm.}
\keywords{1-D heat equation, dynamic boundary conditions, Carleman estimate, null controllability, observability inequality, Hilbert Uniqueness Method.}


\maketitle

\section{Introduction}\label{sec1}
The heat equation is a fundamental partial differential equation used to model the diffusion of heat or temperature in various physical systems. It describes how temperature changes over time and space due to heat conduction, making it a crucial tool in fields such as physics, engineering, and materials science. The equation considers factors like the initial temperature distribution and the behavior of temperature at the boundaries of the region under study. Through analytical or numerical solutions, it enables us to predict how temperature evolves over time and find steady-state solutions, which are valuable for designing systems that reach thermal equilibrium.

The controllability problem of the heat equation is a fascinating and challenging topic within control theory and partial differential equations. It addresses the question of whether, and to what extent, it is possible to manipulate the temperature distribution in a given domain over time by applying suitable control inputs, see for instance \cite{CGMZ'23, CGMZ'21, MOW'23}. This problem has numerous practical applications, such as designing heating or cooling systems, optimizing thermal processes, and controlling the temperature in various industrial and scientific setups. One of the key comments on the controllability of the heat equation is that it depends on the boundary conditions, which is one of the main interests of this work focusing on dynamic boundary conditions that introduces an intriguing dimension into the modeling of heat transfer phenomena. In contrast to traditional boundary conditions that are often static, dynamic boundary conditions consider time-varying factors at the domain's boundary. This concept is particularly valuable in scenarios where the temperature distribution within a system is influenced by evolving external conditions or time-dependent inputs at the boundaries. Let $\Omega \subset \R^n$, $n\geq 2$ be an open bounded domain, with a smooth boundary $\Gamma=\partial \Omega$, the dynamic boundary condition is modelized by the equation $$\partial_t y_\Gamma- \delta \Delta_\Gamma y_\Gamma + \partial_\nu y=0,$$
where $y_\Gamma=y_{\vert \Gamma}$ is the trace of the state $y$ on the boundary $\Gamma$, $\Delta_\Gamma$ is the Laplace-Beltrami and $ \partial_\nu y$ is the normal derivative with respect to the outward unit normal vector field $\nu$. We invite the interested reader to see \cite{CGMZ''23, Hintermann'89,KM'2020}.

In all the previously cited works, it has been mandatory for the diffusion term on the boundary to be present, i.e.  $\delta >0$, see \cite[Remark 3.3]{3}. This situation is notably relevant in one-dimensional scenarios. For instance the one-dimensional heat equation with dynamic boundary conditions, which can be exemplified by the cooling process of a uniformisotropic cylindrical solid bar. In this scenario, both ends of the bar are in contact with a liquid medium. The dynamic boundary conditions for this one-dimensional heat transfer model are established based on the principle of energy conservation. According to this principle, the heat dissipated by the bar through its ends equals the heat absorbed by the surrounding liquid. For a more comprehensive exposition of this concept, we refer to \cite{Langer'1932, CGMZ'22}.

Recently, in  \cite{4}, the authors have treated this case in particular situations where the coefficients do not depend neither in space nor time variables, and the control functions are of separate variables, using the moments method. For the best of our knowledge, there is no existing research that has studied controllability (theoretically or numerically) of such system using Carleman estimates. 

In this paper, we consider the following controlled heat equation with potential and dynamic boundary conditions 
\begin{empheq}[left = \empheqlbrace]{alignat=2} \label{1.1}
\begin{aligned}
&\partial_t y -  \partial_{x}(d(x) \partial_{x} y) + a(t,x)y = 1_{\omega}v(t,x) && \text{ in }\, Q_T=(0,T)\times (0,1) \\
&\partial_t y(t,j)  +d(j)(-1)^{j+1} \partial_x y(t,j) + b_j(t) y(t,j) = 0  &&\text{ on }\, (0,T),\;  j=0,1\\
&(y,y(t,0),y(t,1))_{ \vert t=0}=(y_0, \beta_0, \beta_1) &&\text{ in }\, (0,1),
\end{aligned}
\end{empheq}
where the initial data  $(y_0, \beta_0, \beta_1) \in L^2(0,1) \times \R^2$,  and the coefficients $a\in L^\infty(Q_T)$, $b_0, b_1 \in L^\infty(0,T)$. Additionally, we assume that the diffusion coefficient is a bounded function $d\in W^{1,\infty}(0,1)$ such that $d(x) \geq M \quad \text{for all} \quad x \in [0,1],$  where $M$ is a strictly positive constant. The main purpose of this article is to establish the null controllability of the above system. In other words, for a given $T>0$, $\omega \Subset (0,1)$ and initial data $(y_0, \beta_0, \beta_1) \in L^2(0,1) \times \R^2$, we look for a control  $v\in L^2((0,T)\times\omega)$  such that the corresponding solution satisfies $$\left(y(T,\cdot), y(T,0), y(T,1)  \right) =0 \quad \text{in } (0,1).$$

\section{Results}\label{sec2}
We state our main result ensuring the null controllability of the
one-dimensional heat equation with dynamic boundary conditions.
\begin{theorem}\label{thm:intro} 
For  each $T > 0$,  each nonempty open set $\omega \Subset (0,1)$ and  all initial data
$(y_0,\beta_{0}, \beta_{1})\in  L^2(0,1) \times \R^2$ there is a control $v\in L^2((0,T)\times \omega)$ 
such that the unique mild solution $\left(y , y(\cdot,0), y(\cdot,1)  \right)$ of the system \eqref{1.1} satisfies $$\left(y(T,\cdot), y(T,0), y(T,1)  \right) =0 \quad \text{in } (0,1).$$
\end{theorem}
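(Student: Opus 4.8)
The plan is to reduce the null controllability of \eqref{1.1} to an observability inequality for the associated backward (adjoint) system, and then to establish that inequality by means of a global Carleman estimate adapted to the dynamic boundary conditions. Multiplying \eqref{1.1} by a smooth test couple and integrating by parts, taking care of the boundary traces which now carry their own dynamics, the natural adjoint of \eqref{1.1} is the backward problem
\begin{empheq}[left = \empheqlbrace]{alignat=2}
\begin{aligned}
&-\partial_t \varphi - \partial_x(d(x)\partial_x\varphi) + a(t,x)\varphi = 0 && \text{ in } Q_T,\\
&-\partial_t \varphi(t,j) + d(j)(-1)^{j+1}\partial_x\varphi(t,j) + b_j(t)\varphi(t,j) = 0 && \text{ on } (0,T),\ j=0,1,\\
&(\varphi,\varphi(t,0),\varphi(t,1))_{\vert t=T} = (\varphi_T,\varphi_{T,0},\varphi_{T,1}) && \text{ in } (0,1).
\end{aligned}
\end{empheq}
By the classical duality argument (the Hilbert Uniqueness Method), the existence of a control $v\in L^2((0,T)\times\omega)$ driving \eqref{1.1} to rest at time $T$ is equivalent to the observability inequality
\begin{equation}
\|\varphi(0,\cdot)\|_{L^2(0,1)}^2 + |\varphi(0,0)|^2 + |\varphi(0,1)|^2 \le C \int_0^T\!\!\int_\omega |\varphi(t,x)|^2\,dx\,dt
\end{equation}
holding for every solution of the adjoint system, with $C$ independent of the terminal data.

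The heart of the argument is a new global Carleman estimate for the adjoint system. Following the Fursikov--Imanuvilov method, I would choose a weight $\psi\in C^2([0,1])$ with $\psi>0$, $\partial_x\psi\neq 0$ on $[0,1]\setminus\omega_0$ for some $\omega_0\Subset\omega$, and $\partial_x\psi$ of a prescribed sign at the endpoints $x=0,1$, together with the time singular factor $\theta(t)=1/(t(T-t))$, and set $\alpha=(e^{2\lambda\|\psi\|_\infty}-e^{\lambda(\psi+\text{const})})\theta$ and $\xi=e^{\lambda\psi}\theta$. Conjugating the operator with $e^{-s\alpha}$, splitting into self-adjoint and skew-adjoint parts and estimating the cross term, one obtains an estimate of the form
\begin{equation}
s\lambda^2\!\!\int_{Q_T}\!\! e^{-2s\alpha}\xi|\varphi|^2 + \cdots \ \le\ C\Big( \int_{Q_T}\!\! e^{-2s\alpha}|\partial_t\varphi+\partial_x(d\partial_x\varphi)|^2 + s\lambda\!\!\int_0^T\!\!\!\int_{\omega_0} e^{-2s\alpha}\xi|\varphi|^2\Big) + \mathcal{B},
\end{equation}
where $\mathcal{B}$ collects the boundary contributions.

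The main obstacle, and the genuinely new point compared with static boundary conditions, is the treatment of $\mathcal{B}$: the integrations by parts in $x$ produce boundary traces $\partial_x\varphi(t,j)$ and $\varphi(t,j)$ which must be reabsorbed using the ODEs satisfied by $\varphi(t,j)$ on the boundary. I would therefore carry the weights $e^{-2s\alpha}$ and $\xi$ evaluated at the endpoints throughout the computation, use the dynamic boundary relation to replace $d(j)(-1)^{j+1}\partial_x\varphi(t,j)$ by $\partial_t\varphi(t,j)-b_j(t)\varphi(t,j)$, and choose the sign of $\partial_x\psi(j)$ so that the dominant boundary terms carry a favorable sign and can be discarded; the remaining lower-order boundary terms are absorbed by the left-hand side for $s,\lambda$ large.

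Finally, from the Carleman estimate I would derive the observability inequality in the usual two steps. Restricting the time integral to $(T/4,3T/4)$, where $\theta$ and the weights are bounded above and below, turns the weighted inequality into
\begin{equation}
\int_{T/4}^{3T/4}\!\!\Big(\|\varphi(t,\cdot)\|_{L^2(0,1)}^2 + |\varphi(t,0)|^2 + |\varphi(t,1)|^2\Big)dt \le C\int_0^T\!\!\int_\omega |\varphi|^2.
\end{equation}
A standard dissipation/energy estimate for the adjoint system, controlling $\|\varphi(0,\cdot)\|_{L^2(0,1)}^2 + |\varphi(0,0)|^2 + |\varphi(0,1)|^2$ by $\|\varphi(t,\cdot)\|_{L^2(0,1)}^2 + |\varphi(t,0)|^2 + |\varphi(t,1)|^2$ for $t\in(T/4,3T/4)$ up to a Gronwall factor depending on $\|a\|_\infty$ and $\|b_j\|_\infty$, then bounds the value at $t=0$ by the time average over $(T/4,3T/4)$, yielding the observability inequality. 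Combining this with the duality equivalence of the first step produces the control $v\in L^2((0,T)\times\omega)$ of minimal $L^2$-norm and completes the proof.
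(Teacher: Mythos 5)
Your proposal is correct and follows essentially the same route as the paper: reduction of null controllability to an observability inequality for the backward adjoint system via HUM, a global Carleman estimate with Fursikov--Imanuvilov weights built from an auxiliary function vanishing at $x=0,1$ with a sign condition on its derivative at the endpoints, treatment of the boundary traces by exploiting the dynamic boundary equations so the dominant boundary terms have a favorable sign, and finally observability via restriction to an interior time interval combined with a dissipation/Gronwall estimate. The only cosmetic difference is that the paper conjugates the boundary operator as a separate identity (adding $\|N_1\psi+N_2\psi\|_{L^2(0,T)}^2$ to the interior identity and expanding cross terms, keeping the resulting positive boundary terms on the left-hand side as observation terms) rather than substituting the boundary ODE into the trace terms, but this yields the same cancellations and the same estimate.
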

It is currently established that the null controllability of a linear system aligns with an observability inequality for its adjoint system. To elaborate, consider \(\varphi\), the solution to the following adjoint system corresponding to \eqref{1.1}.
\begin{empheq}[left = \empheqlbrace]{alignat=2} \label{2}
\begin{aligned}
&-\partial_t \varphi - \partial_x(d(x) \partial_{x} \varphi) + a(t,x)\varphi =0 &&\text{ in }   Q_T\\
&-\partial_t \varphi(t,j)  +(-1)^{j+1} d(j)\partial_x \varphi(t,j) + b_j(t) \varphi(t,j) = 0 &&\text{ on }(0,T),\;  j=0,1\\
&(\varphi,\varphi(t,0),\varphi(t,1))_{\vert t=T}=(\varphi_T, \gamma_0, \gamma_1) &&\text{ in }(0,1).
\end{aligned}
\end{empheq}
The strategy is to establish a new Carleman estimate for the above adjoint system, leading to the following observability estimate.
\begin{equation*}
\Vert \varphi(0,\cdot)\Vert_{L^2(0,1)}^2 + \sum_{j=0,1} \vert\varphi(0,j) \vert^2   
 \leq C \int_0^T \int_{\omega} \vert \varphi \vert^2 \,dx\,dt,
\end{equation*}
and thus  the null controllability  of the system \eqref{1.1}.

The rest of the paper is organized as follows, In Section \ref{sec3}, the functional setting and some useful results associated to the heat equation with dynamic boundary conditions are presented along with the well-posedness of the corresponding systems. In Section \ref{sec4}, we elaborate a new  Carleman estimate for the system \eqref{2}. Section \ref{sec5} is devoted to establish the internal null controllability. In the final section \ref{sec6}, we present some numerical tests to illustrate our controllability results.

\section{Well-posedness of the system}\label{sec3}
In this section, we present the well-posedness of the following inhomogeneous system
\begin{empheq}[left = \empheqlbrace]{alignat=2} \label{eq:system}
\begin{aligned}
&\partial_t y - \partial_x(d(x)\partial_{x} y)+a(t,x)y = f(t,x), &&  \text{ in }  Q_T,\\
&\partial_t y(t,j)  +(-1)^{j+1}d(j) \partial_x y(t,j) + b_j(t) y(t,j) = g_j(t), && \text{on }(0,T),\;  j=0,1,\\
&(y,y(t,0),y(t,1))_{\vert t=0} = (y_0,\beta_0,\beta_1)  ,  && \text{ in } (0,1),
\end{aligned}
\end{empheq}
where $d \in W^{1,\infty}(0,1)$, $ a \in L^\infty ( Q_T)$, and $b_j \in L^\infty (0,T), j=0,1.$
For this end, let us introduce the following spaces:
\begin{align*}
&\L^2 := L^2(0,1)\times \R^2, \qquad 
\H^k := \lbrace (y,y(0),y(1)), \; y \in H^k(0,1)\rbrace\qquad \text{for \ }k\in\N,\\
&\E_1(t_0, t_1) := H^1(t_0, t_1;\L^2) \cap \L^2(t_0, t_1;\H^2) \,\text{for \ } t_1 > t_0 \in \R; \, \E_1 := \E_1(0, T).
\end{align*}
The scalar product on $\L^2$ is given by
$$\langle (u,\alpha_0,\alpha_1),(v,\beta_0,\beta_1) \rangle_{\L^2}=\langle u,v \rangle _{L^2(0,1)}+  \sum_{j=0,1} \alpha_j \beta_j  ,$$
where $u,v \in L^2(0,1)$ and $\alpha_j, \beta_j \in \R$ for j=0,1. Moreover,
$$\Vert(u,\alpha_0,\alpha_1) \Vert_{\L^2}= \big( \Vert u \Vert_{L^2(0,1)}^2 + \sum_{j=0,1} \vert \alpha_j \vert^2  \big)^{\frac{1}{2}} $$ is the associated norm to the defined scalar product.

The equation \eqref{eq:system} can be rewritten as an abstract Cauchy problem on $\L^2$ as follows

\begin{empheq}[left = \empheqlbrace]{alignat=2} \label{abstrait}
\begin{aligned}
&\mathbf{Y}^{\prime}(t)= (A + B(t)) \mathbf{Y}(t) + F(t), \qquad t \in (0,T]\\
&\mathbf{Y}(0)=\mathbf{Y}_0=\begin{pmatrix}
y_0, \beta_0, \beta_1
\end{pmatrix},
\end{aligned}
\end{empheq}
where $\mathbf{Y}(t):=\begin{pmatrix}
y(t,\cdot), y(t,0), y(t,1)
\end{pmatrix}$ and the linear operator $A: D(A) \subset \L^2 \longrightarrow \L^2$ is given by
$$A := \begin{pmatrix}
\partial_x(d(x) \partial_x) &  0 & & 0 &\\
 \\
 d(0)\partial_x \vert_{x=0}  & 0  & & 0 &\\
 \\
 -d(1)\partial_x \vert_{x=1}   & 0 & & 0 &
\end{pmatrix}, \qquad D(A)= \H^2,$$
and
$$
B(t):= \begin{pmatrix}
-a(t,\cdot) & 0 & 0 \\
\\
0 &  -b_0(t)  & 0 \\
\\
0 & 0 & - b_1(t)
\end{pmatrix}, \quad 
F(t):= \begin{pmatrix}
f(t,\cdot) \\
\\
g_1(t)\\
\\
g_2(t)
\end{pmatrix}.$$
The following first generation result is provided in  \cite[Proposition 5.2.1]{4}.
\begin{proposition}
The operator $A$ is densely defined, self-adjoint and generates a self-adjoint compact analytic $C_0$-semigroup $(e^{tA})_{t \geq 0}$ of angle $\frac{\pi}{2}$ on $\L^2$.
\end{proposition}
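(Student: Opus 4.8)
The plan is to verify the three structural properties --- dense domain, symmetry, and maximal dissipativity --- from which self-adjointness follows by the standard criterion for symmetric operators, and then to read off analyticity and compactness from spectral semigroup theory. Concretely, I would first establish that $A$ is symmetric with $\langle A\mathbf{Y},\mathbf{Y}\rangle_{\L^2}\le 0$, and then that $\lambda I-A$ is surjective for one (hence every) $\lambda>0$; a densely defined symmetric operator for which $\mathrm{Ran}(\lambda I-A)=\L^2$ for some real $\lambda$ is automatically self-adjoint, and a self-adjoint operator bounded above generates the desired semigroup.

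Density of $D(A)=\H^2$ in $\L^2$ is elementary: given $(u,\alpha_0,\alpha_1)\in\L^2$, one approximates $u$ in $L^2(0,1)$ by a smooth function and adds a smooth boundary-layer correction, supported near $\{0,1\}$ and small in $L^2$-norm, adjusting the two trace values to $\alpha_0,\alpha_1$. For symmetry the decisive point is the interplay between the interior operator and the two boundary rows. Taking $\mathbf{Y}=(y,y(0),y(1))$ and $\mathbf{Z}=(z,z(0),z(1))$ in $\H^2$ and integrating by parts,
\begin{equation*}
\int_0^1 \partial_x(d\,\partial_x y)\,z\,dx = \big[d\,\partial_x y\,z\big]_0^1-\int_0^1 d\,\partial_x y\,\partial_x z\,dx,
\end{equation*}
the boundary contributions $d(1)\partial_x y(1)z(1)-d(0)\partial_x y(0)z(0)$ are cancelled exactly by the terms $d(0)\partial_x y(0)z(0)-d(1)\partial_x y(1)z(1)$ coming from the second and third components of $A\mathbf{Y}$. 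Hence
\begin{equation*}
\langle A\mathbf{Y},\mathbf{Z}\rangle_{\L^2}=-\int_0^1 d\,\partial_x y\,\partial_x z\,dx,
\end{equation*}
which is symmetric in $\mathbf{Y},\mathbf{Z}$ and yields $\langle A\mathbf{Y},\mathbf{Y}\rangle_{\L^2}=-\int_0^1 d\,|\partial_x y|^2\,dx\le 0$ since $d\ge M>0$.

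The step I expect to be the main obstacle is surjectivity of $\lambda I-A$, as it is here that the dynamic boundary conditions must be correctly encoded. Fixing $\lambda>0$ and $\mathbf{F}=(f,g_0,g_1)\in\L^2$, solving $(\lambda I-A)\mathbf{Y}=\mathbf{F}$ amounts to the elliptic problem $\lambda y-\partial_x(d\,\partial_x y)=f$ together with the Robin-type relations $\lambda y(j)+(-1)^{j+1}d(j)\partial_x y(j)=g_j$, $j=0,1$. I would set this up variationally on $\H^1$ through the bilinear form
\begin{equation*}
\mathcal{A}_\lambda(\mathbf{Y},\mathbf{Z})=\lambda\,\langle \mathbf{Y},\mathbf{Z}\rangle_{\L^2}+\int_0^1 d\,\partial_x y\,\partial_x z\,dx,
\end{equation*}
which is bounded and, thanks to $\lambda>0$ and $d\ge M$, coercive on $\H^1$; the Lax--Milgram theorem then produces a unique weak solution $\mathbf{Y}\in\H^1$, and since $\partial_x(d\,\partial_x y)=\lambda y-f\in L^2(0,1)$ one recovers $y\in H^2(0,1)$, i.e. $\mathbf{Y}\in\H^2=D(A)$. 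This proves $A=A^*$ and, together with the dissipativity above, that $A$ is maximally dissipative.

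Finally, a self-adjoint operator bounded above generates an analytic $C_0$-semigroup of angle $\frac{\pi}{2}$, self-adjoint because $A$ is, so only compactness remains. Here I would use that the resolvent $(\lambda I-A)^{-1}$ maps $\L^2$ into $\H^2$ and that the embedding $\H^2\hookrightarrow\L^2$ is compact: a bounded sequence in $\H^2$ corresponds to a bounded sequence in $H^2(0,1)$, which by the one-dimensional Rellich and Sobolev embeddings converges, along a subsequence, in $L^2(0,1)$ and in $C^0([0,1])$, so that both the function part and the two trace parts converge in $\L^2$. A compact resolvent forces the associated analytic semigroup $(e^{tA})_{t\ge 0}$ to be compact for $t>0$, which completes the proof.
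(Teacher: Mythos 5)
Your proof is correct, but it is worth pointing out that the paper itself does not prove this proposition at all: it is quoted verbatim from \cite[Proposition 5.2.1]{4}, so the paper's ``proof'' is a citation, while yours is a self-contained argument. What you have written is the classical form-method route, and it is sound at every step: the exact cancellation of the integration-by-parts boundary terms against the two boundary rows of $A$, giving $\langle A\mathbf{Y},\mathbf{Z}\rangle_{\L^2}=-\int_0^1 d\,\partial_x y\,\partial_x z\,dx$, is precisely the reason the operator is symmetric and dissipative on $\L^2$ (and would not be on $L^2(0,1)$ alone); Lax--Milgram on $\H^1$ with the coercive form $\mathcal{A}_\lambda$ plus the one-dimensional bootstrap ($d\,\partial_x y\in H^1(0,1)$ and $1/d\in W^{1,\infty}$ since $d\ge M>0$, hence $y\in H^2(0,1)$) gives surjectivity of $\lambda I-A$, and the standard criterion (symmetric plus surjective at one real $\lambda$) then yields self-adjointness; analyticity of angle $\frac{\pi}{2}$ and self-adjointness of $e^{tA}$ follow from the spectral theorem for a self-adjoint operator bounded above, and compactness from the compact embedding $\H^2\hookrightarrow\L^2$ together with norm-continuity of analytic semigroups. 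Two small points you should spell out to make the argument airtight: after obtaining the interior equation by testing against $(\phi,0,0)$, $\phi\in C_c^\infty(0,1)$, you must test against general $\mathbf{Z}\in\H^1$ (whose boundary values $z(0),z(1)$ are free) and integrate back by parts to recover the two Robin relations, i.e.\ that the weak solution really satisfies the second and third rows of $(\lambda I-A)\mathbf{Y}=\mathbf{F}$; and for compactness of the resolvent you need the bootstrap recorded as a quantitative estimate $\|y\|_{H^2(0,1)}\le C\|\mathbf{F}\|_{\L^2}$, not merely as a regularity statement. In substance your construction (the operator associated with the closed symmetric form $(\mathbf{Y},\mathbf{Z})\mapsto\int_0^1 d\,\partial_x y\,\partial_x z\,dx$ on $\H^1$) is the same one used in the cited literature; what it buys over the paper's citation is self-containedness, at essentially no extra cost in length.
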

Next, we recall the existence and regularity results from \cite[Proposition 2.5]{3}.
\begin{proposition}
Let $f \in L^2(Q_T)$, $g_0, g_1 \in L^2(0,T),$ $(y_0, \beta_0, \beta_1) \in \L^2$ and $(\varphi_T, \gamma_0, \gamma_1) \in \L^2$. Then, the following assertions are true.
\begin{enumerate}
    \item [(a)] There is a unique mild solution $\mathbf{Y} := (y, y(\cdot,0),y(\cdot,1)) \in  C([0, T];\L^2) $ of \eqref{eq:system}. Moreover, $\mathbf{Y}$ belongs to $\E_1(\tau,T)$ and solves \eqref{eq:system} strongly on $(\tau,T)$ with initial data $\mathbf{Y}(\tau) := (y(\tau,\cdot) y(\tau,0),y(\tau,1))$, for each  $\tau \in (0, T)$. There are bounded linear operators $S(t,s)$ on $\L^2$ depending strongly continuous on $0 \leq s \leq t \leq T$ such that
  $$\mathbf{Y} (t) = S(t,0) \mathbf{Y}_0 + \int_{0}^{t} S(t,s)(f(s,\cdot), g_0(s),g_1(s)) ds; \quad t \in [0, T].$$
    \item [(b)]  Given $R > 0$, there is a constant $C = C(R) > 0$ such that for all $a$, $b_0$ and $b_1$ with
$\Vert a \Vert_{\infty}$, $\Vert b_0 \Vert_{\infty}$, $\Vert b_1 \Vert_{\infty} \leq R$ and all data the mild solution $\mathbf{Y}$ satisfies $$\Vert \mathbf{Y} \Vert_{C([0,T ],\L^2)} \leq  C ( \Vert \mathbf{Y}_0 \Vert_{\L^2} + \Vert f \Vert_{L^2( Q_T)} +\sum_{j=0,1} \Vert g_j \Vert_{L^2(0,T)}).$$
    \item [(c)] If $\mathbf{Y}_0 \in \H^1$, then the mild solution from (a) is the strong one.
    \item [(d)] The backward system of \eqref{eq:system} with $f=g_0=g_1=0$ admits a unique mild solution given by $(\varphi(t),\varphi(t,0),\varphi(t,1)) = S^*(T,t)(\varphi_T, \gamma_0,\gamma_1)$  where $S^*(T, t)$ denotes the adjoint of $S(T, t)$. It is the unique strong solution if $(\varphi_T, \gamma_0, \gamma_1) \in \H^1$.
\end{enumerate}
\end{proposition}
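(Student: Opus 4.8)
The plan is to regard the inhomogeneous problem \eqref{eq:system} as the abstract Cauchy problem \eqref{abstrait} on the Hilbert space $\L^2$ and to exploit the structure of $A$ together with the boundedness of the perturbation $B(t)$. By the preceding generation result, $-A$ is a nonnegative self-adjoint operator whose semigroup $(e^{tA})_{t\geq0}$ is analytic of angle $\frac{\pi}{2}$, while for almost every $t$ the operator $B(t)$ is multiplication by the bounded functions $-a(t,\cdot)$, $-b_0(t)$, $-b_1(t)$, hence a bounded operator on $\L^2$ whose norm is essentially bounded in $t$. First I would construct the evolution family $\{S(t,s)\}_{0\leq s\leq t\leq T}$ solving $\partial_t S(t,s)=(A+B(t))S(t,s)$, $S(s,s)=I$: since $A+B(t)$ is a time-dependent bounded perturbation of a fixed analytic generator, the family is obtained by the Dyson--Phillips series $S(t,s)=\sum_{n\geq0}S_n(t,s)$ with $S_0(t,s)=e^{(t-s)A}$ and $S_{n+1}(t,s)=\int_s^t e^{(t-r)A}B(r)S_n(r,s)\,dr$, which converges in operator norm and yields strongly continuous, uniformly bounded operators. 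The mild solution is then defined by the variation-of-constants formula displayed in (a).

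For the interior regularity in (a), I would use parabolic smoothing. Because $(e^{tA})$ is analytic, $e^{(t-\tau)A}\mathbf{Y}(\tau)\in D(A)=\H^2$ for $t>\tau$ and is smooth in $t$; moreover, since $-A$ is self-adjoint the spectral theorem furnishes $L^2$-maximal regularity for the convolution, so $t\mapsto\int_\tau^t S(t,s)F(s)\,ds$ lies in $H^1(\tau,T;\L^2)\cap L^2(\tau,T;\H^2)=\E_1(\tau,T)$. Combining the two contributions shows $\mathbf{Y}\in\E_1(\tau,T)$ and that it solves \eqref{eq:system} strongly on $(\tau,T)$ for every $\tau\in(0,T)$.

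The a priori bound in (b) follows by taking norms in the variation-of-constants formula: the uniform analytic bound $\Vert e^{(t-s)A}\Vert\leq M_0$ together with $\Vert B(t)\Vert\leq R$ feeds a Gronwall argument giving $\Vert S(t,s)\Vert\leq M_0 e^{M_0 R T}$ uniformly over coefficients with $\Vert a\Vert_{\infty},\Vert b_0\Vert_{\infty},\Vert b_1\Vert_{\infty}\leq R$, which yields the stated constant $C=C(R)$. For (c), the key structural fact is that $\H^1$ coincides with the form domain $D((-A)^{1/2})$ of the self-adjoint operator $-A$; hence for $\mathbf{Y}_0\in\H^1$ the term $e^{tA}\mathbf{Y}_0$ already belongs to $\E_1(0,T)$ up to $t=0$, removing the boundary-layer singularity at the origin and upgrading the mild solution to a strong one on all of $[0,T]$. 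Finally, (d) is obtained by duality: since $A$ and each $B(t)$ are self-adjoint, the adjoint family $S^*(T,t)$ solves the backward problem \eqref{2}, and the same analyticity and form-domain arguments give its regularity, making it the strong solution when $(\varphi_T,\gamma_0,\gamma_1)\in\H^1$.

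The main obstacle I expect is the $L^2$-maximal regularity step needed to place the mild solution in $\E_1$: one must check that the parabolic smoothing of the analytic, self-adjoint generator $A$ survives the coupling between the bulk equation and the dynamic boundary conditions encoded in the off-diagonal entries of $A$, and that the Dyson--Phillips perturbation by $B(t)$ preserves both the analytic-type estimates and the space $\E_1$. Verifying the identification $\H^1=D((-A)^{1/2})$ in this coupled bulk-boundary setting, which underlies part (c), is the other delicate point.
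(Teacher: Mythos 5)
The paper does not actually prove this proposition: it is recalled from \cite[Proposition 2.5]{3}, where it is established in the more general $n$-dimensional reactive--diffusive setting, so there is no in-paper argument to compare against line by line. Your outline is, in substance, the same semigroup-theoretic route taken in that source: an evolution family $S(t,s)$ for the bounded, strongly measurable perturbation $B(t)$ of the self-adjoint analytic generator $A$, $L^2$-maximal regularity of $A$ (de Simon's theorem for self-adjoint generators on Hilbert spaces) for the interior $\E_1$-regularity, a Gronwall bound uniform over $\Vert a\Vert_\infty,\Vert b_j\Vert_\infty\leq R$ for (b), the trace-space identification for (c), and self-adjointness of $A$ and $B(t)$ for the duality in (d); so your proposal is correct in approach and consistent with the proof the paper relies on. Two steps deserve tightening. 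First, maximal regularity applies to convolution with $e^{(t-s)A}$, not directly to convolution with $S(t,s)$; the clean fix is to bootstrap, treating $B(\cdot)\mathbf{Y}+F\in L^2(\tau,T;\L^2)$ as the inhomogeneity for the autonomous operator $A$. Second, membership in $\E_1(\tau,T)$ up to $t=\tau$ requires the initial value $\mathbf{Y}(\tau)$ to lie in the trace space $(\L^2,\H^2)_{1/2,2}=\H^1$; pointwise analytic smoothing ($e^{(t-\tau)A}\mathbf{Y}(\tau)\in\H^2$ for $t>\tau$) is not enough by itself, since $\Vert Ae^{tA}\Vert\sim t^{-1}$ is not square integrable near $t=0$. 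Instead one uses the square-function estimate $\int_0^T\Vert(-A)^{1/2}e^{tA}x\Vert^2\,dt\leq C\Vert x\Vert^2$ to conclude $\mathbf{Y}\in L^2(0,T;\H^1)$, picks $\tau'<\tau$ with $\mathbf{Y}(\tau')\in\H^1$, and then applies maximal regularity on $(\tau',T)\supset(\tau,T)$; this also makes your form-domain identification $\H^1=D((-A)^{1/2})$ do exactly the work needed in (c).
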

\section{The Carleman estimate} \label{sec4}
In this section we prove a new Carleman estimate for the following backward adjoint problem 
\begin{empheq}[left = \empheqlbrace]{alignat=2} \label{3}
\begin{aligned}
&-\partial_t \varphi - \partial_x(d(x)\partial_{x} \varphi) + a(t,x)\varphi=f(t,x) &&\text{ in }  
 Q_T,\\
&-\partial_t \varphi(t,j)  +(-1)^{j+1}d(j)\partial_x \varphi(t,j) + b_j(t) \varphi(t,j) = g_j(t) &&\text{ on }  (0,T),\;  j=0,1, \\
&(\varphi,\varphi(t,0),\varphi(t,1))_{\vert t=T}=(\varphi_T, \gamma_0,\gamma_1) &&\text{ in } (0,1).
\end{aligned}
\end{empheq}
We first need to introduce the weight functions, which are based on the following auxiliary function $\eta_0$.
\begin{lem}\label{lem}
 Given a nonempty open set $\omega' \Subset (0,1)$, there is a function $\eta_0$ that satisfies:
 \begin{align*}
     &\eta_0\in C^2(0,1), \quad \eta_0 > 0 \;\;\text{ in } (0,1), \quad \eta_0(0)=\eta_0(1)= 0, \\
     & \vert \eta'_0 \vert > 0 \;\;\text{ in }\overline{(0,1)\backslash \omega'}, \quad  \partial_\nu \eta_0(j)=(-1)^{j+1} \eta_0'(j)<0. 
\end{align*}
\end{lem}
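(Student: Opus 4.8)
The plan is to prove this as the one-dimensional counterpart of the classical Fursikov--Imanuvilov weight lemma, but since we work on an interval I will give a direct and explicit construction rather than invoke the general multidimensional statement. The whole point is to produce a function whose only critical point is forced to sit inside the control set $\omega'$, so that its derivative cannot vanish on the complement. First I would fix an open subinterval $(a,b) \Subset \omega'$ and arrange $\eta_0$ to be strictly increasing on $[0,a]$ and strictly decreasing on $[b,1]$, with its unique maximum attained at a point $c \in (a,b)$.

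Concretely, I would not write a formula for $\eta_0$ itself but prescribe its derivative. Choose $c \in (a,b)$ and a function $h \in C^1([0,1])$ with $h(c)=0$, $h > 0$ on $[0,c)$, and $h < 0$ on $(c,1]$ (for instance $h(x)=(c-x)\rho(x)$ with $\rho \in C^1([0,1])$, $\rho>0$), and then set $\eta_0(x) := \int_0^x h(s)\,ds$. This makes $\eta_0 \in C^2([0,1])$ with $\eta_0' = h$, immediately giving $\eta_0(0)=0$, and it gives $\eta_0(1)=0$ precisely when the total mass vanishes, $\int_0^1 h = 0$. The latter is the one genuine constraint to engineer: since $h$ changes sign exactly once, $\int_0^1 h$ is the signed difference of the areas of the positive lobe on $[0,c]$ and the negative lobe on $[c,1]$, and these can be balanced by the choice of $\rho$ and $c$. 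A clean way is a weighted-mean computation: with $h(x)=(c-x)\rho(x)$ one has $\int_0^1 h = 0$ iff $c = \big(\int_0^1 x\rho\big)\big/\big(\int_0^1 \rho\big)$, the $\rho$-weighted mean of $x$; choosing $\rho$ concentrated on $(a,b)$ pushes this mean into $(a,b)$, so one may take $c$ equal to it and land inside $\omega'$.

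With such an $h$ in hand, the verification is immediate. Since $\eta_0' = h > 0$ on $[0,c)$ and $< 0$ on $(c,1]$, the function $\eta_0$ strictly increases and then strictly decreases, so it is positive on all of $(0,1)$ with a single maximum at $c$; this yields $\eta_0 > 0$ in $(0,1)$ together with $\eta_0(0)=\eta_0(1)=0$. The derivative $\eta_0' = h$ vanishes only at $c \in (a,b) \subset \omega'$, and since $\omega'$ is open it contains a neighbourhood of $c$, whence $c \notin \overline{(0,1)\setminus\omega'}$ and therefore $|\eta_0'| > 0$ on $\overline{(0,1)\setminus\omega'}$. Finally $\eta_0'(0)=h(0)>0$ and $\eta_0'(1)=h(1)<0$, so $\partial_\nu\eta_0(0) = -\eta_0'(0) < 0$ and $\partial_\nu\eta_0(1) = \eta_0'(1) < 0$, i.e. $\partial_\nu\eta_0(j) = (-1)^{j+1}\eta_0'(j) < 0$ for $j=0,1$.

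The statement is not deep, so there is no serious analytic obstacle; the only point requiring care is reconciling the three simultaneous demands --- the homogeneous boundary values at both ends (the integral constraint $\int_0^1 \eta_0' = 0$), strict interior positivity, and non-vanishing of $\eta_0'$ off $\omega'$ --- all of which pin the unique critical point of $\eta_0$ inside $\omega'$. Once the sign pattern of $\eta_0'$ is fixed with its single zero localized in $(a,b)$ and the two lobes balanced, every requested property, including the $C^2$ regularity inherited from $h \in C^1([0,1])$ and the strictly negative normal derivatives, follows at once.
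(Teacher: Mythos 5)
Your construction is correct, and every claimed property checks out: with $\rho\in C^1([0,1])$, $\rho>0$, and $h(x)=(c-x)\rho(x)$, the antiderivative $\eta_0(x)=\int_0^x h(s)\,ds$ is $C^2$ up to the closed interval, vanishes at $x=0$ by definition and at $x=1$ exactly under the balance condition $c=\bigl(\int_0^1 x\rho\,dx\bigr)/\bigl(\int_0^1 \rho\,dx\bigr)$; taking $\rho=\varepsilon+\phi$ with $\phi$ a bump supported in $(a,b)\Subset\omega'$ and $\varepsilon$ small places this weighted mean inside $(a,b)$, so the unique critical point of $\eta_0$ lies in $\omega'$, and the sign pattern of $h$ gives strict positivity of $\eta_0$ in $(0,1)$, $|\eta_0'|>0$ on $\overline{(0,1)\setminus\omega'}$, and $\eta_0'(0)>0$, $\eta_0'(1)<0$, i.e. $\partial_\nu\eta_0(j)=(-1)^{j+1}\eta_0'(j)<0$. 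Note, however, that the paper itself offers no proof of this lemma: it is stated as the standard weight-function lemma of Carleman theory (in the spirit of Fursikov--Imanuvilov, as used in the cited works of Fernandez-Cara--Guerrero and Maniar--Meyries--Schnaubelt), whose general $n$-dimensional proof requires a genuinely nontrivial argument (a Morse-theoretic perturbation to push all critical points into $\omega'$). Your contribution is therefore a self-contained, elementary one-dimensional substitute: you correctly identify that in 1D the only real obstruction is the integral constraint $\int_0^1\eta_0'=0$ coupled with the localization of the single zero of $\eta_0'$ inside $\omega'$, and you resolve it explicitly by the weighted-mean balancing of the two lobes of $h$. This is exactly the kind of direct construction the interval setting permits and the multidimensional machinery obscures, so it is a legitimate and arguably preferable proof for the purposes of this paper.
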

Let $\omega'\Subset (0,1)$,  $\lambda \geq 1$,  $m>1$ and $\eta_0$ as in Lemma \ref{lem}. Following \cite{2}, we define the weight functions $\alpha$ and $\xi$ as follows:
    \begin{align}
\alpha(t,x) &= (t(T-t))^{-1} \big( e^{2\lambda m \|\eta_0\|_\infty} 
       - e^{\lambda(m\|\eta_0\|_\infty+  \eta_0(x))}\big), \notag\\
\xi(t,x) &= (t(T-t))^{-1} e^{\lambda(m\|\eta_0\|_\infty+ \eta_0(x))}\notag
\end{align}
for $(t,x) \in  (0,T) \times [0,1]$. Note that $\alpha$ and $\xi$ are $C^2(0,1)$ positive on $(0,T)\times [0,1]$, and blow up at $t=0,$ $t =T$. 
\begin{remark}:\label{rmk}
The properties of $\eta_0$ gives the following results
\begin{align}\label{eta}
\alpha(t,0)=\alpha(t,1)&:=\alpha(t), \quad \xi(t,0)=\xi(t,1):=\xi(t), \notag\\
\partial_x \alpha & = - \partial_x \xi = - \lambda \xi \eta_0',\notag\\
\partial_{xx} \alpha&  = - \lambda^2 \xi  \eta_0'^2  - \lambda \xi  \eta_0''.
\end{align}
\end{remark}
Here, we state the Carleman estimate for our adjoint system \eqref{3}.
\begin{lem}\label{2*}
Let $T>0$, $\omega$ a nonempty open subset of $(0,1)$, $a\in L^\infty( Q_T)$, and $b_0, \, b_1\in L^ \infty(0,T)$ such that, for a given $R >0$ we have $\Vert a \Vert_\infty,\, \Vert b_0 \Vert_\infty, \, \Vert b_1 \Vert_\infty\leq R$. Choose a nonempty open set $\omega'\Subset \omega$. Define $\eta_0$, $\alpha$ and $\xi$ as above with respect to $\omega'$. 
Then,  there exist constants $ C >0 $, and $\lambda_1, \, s_1 \ge 1$ such that 

\begin{align}\label{Carleman}
    &s^{-1}\int_{ Q_T} \xi^{-1} e^{- 2s \alpha} \vert \partial_t \varphi \vert^2 \, dx dt +s^{-1}\int_{ Q_T} \xi^{-1} e^{- 2s \alpha} \vert\partial_x(d(x)\partial_{x} \varphi) \vert^2 \, dx dt \notag\\
&+s^{-1}\sum_{j=0,1}\int_{0}^{T} \xi^{-1} e^{- 2s \alpha} \vert \partial_t \varphi(t,j) \vert^2 \,dt  +  s^3 \lambda^4 \int_{ Q_T} \xi^3 e^{- 2s \alpha} \vert \varphi \vert^2 \, dx \, dt \notag\\
&+ s \lambda^2 \int_{ Q_T} \xi e^{- 2s \alpha} \vert \partial_x \varphi \vert^2 \, dx \, dt +  s^3 \lambda^3 \sum_{j=0,1}\int_{0}^{T} \xi^3 e^{- 2s \alpha} \vert \varphi(t,j) \vert^2 \,dt  \notag  \\
&+ s \lambda \sum_{j=0,1} \int_{0}^{T} \xi e^{- 2s \alpha} \vert \partial_x \varphi(t,j) \vert^2\,dt \leq C \left( \int_{ Q_T} e^{-2s\alpha} \vert L \varphi \vert^2 \, dx dt \right.\\
&+ \left. \sum_{j=0,1} \int_{0}^{T} e^{-2s\alpha} \vert K\varphi(t,j)\vert^2 \,dt + s^3 \lambda^4 \int_{(0,T)\times \omega} \xi^3 e^{- 2s \alpha} \vert \varphi \vert^2 \, dx \, dt \right)\notag
\end{align}
for all $\lambda \geq \lambda_1$, $s \geq s_1$ and $(\varphi,\varphi(\cdot,0),\varphi(\cdot,1))\in \E_1$,
where 
$$L\varphi=\partial_t \varphi+ \partial_{x}(d(x) \partial_x \varphi) - a\varphi$$ and 
$$K\varphi(t,j)=\partial_t\varphi(t,j) - (-1)^{j+1} d(j)\partial_{x} \varphi(t,j) - b_{j}(t)\varphi(t,j), \qquad j=0,1.$$
\end{lem}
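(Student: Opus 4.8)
The plan is to follow the Fursikov--Imanuvilov strategy, adapted to the dynamic boundary conditions. First I would perform the standard conjugation: setting $w = e^{-s\alpha}\varphi$, so that $w$ together with its traces $w(\cdot,j)=e^{-s\alpha(\cdot)}\varphi(\cdot,j)$ vanishes to all orders at $t=0$ and $t=T$ thanks to the blow-up of $\alpha$ (recall $\alpha>0$ since $m\|\eta_0\|_\infty>\eta_0(x)$). Writing $Pw := e^{-s\alpha} L(e^{s\alpha} w)$ and expanding the derivatives via Remark~\ref{rmk}, I would decompose $Pw = P_1 w + P_2 w + Rw$, where $P_1$ gathers the formally self-adjoint terms (the leading ones being $\partial_x(d\,\partial_x w)$ and $s^2\lambda^2 d(\eta_0')^2\xi^2 w$), $P_2$ gathers the formally skew-adjoint terms (the leading ones being $\partial_t w$ and a first-order transport term proportional to $s\lambda\, d\,\eta_0'\,\xi\,\partial_x w$), and $R$ collects lower-order remainders carrying extra powers of $\lambda^{-1}$ or $s^{-1}$. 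The argument then rests on the elementary identity $\|e^{-s\alpha}L\varphi\|_{L^2(Q_T)}^2 = \|P_1 w\|^2 + \|P_2 w\|^2 + 2\langle P_1 w, P_2 w\rangle + (\text{remainders})$, so extracting a positive lower bound from the cross term $2\langle P_1 w, P_2 w\rangle$ is the heart of the matter.

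Next I would expand $\langle P_1 w, P_2 w\rangle_{L^2(Q_T)}$ by integrating by parts in both $t$ and $x$. The time integrations produce no boundary contributions because $w$ vanishes at $t=0,T$; the spatial integrations over $(0,1)$, however, generate terms evaluated at $x=0,1$. Collecting the interior contributions, the dominant positive terms $s^3\lambda^4\int_{Q_T}\xi^3 e^{-2s\alpha}|\varphi|^2$ and $s\lambda^2\int_{Q_T}\xi e^{-2s\alpha}|\partial_x\varphi|^2$ appear, the positivity of their coefficients being guaranteed by the lower bound $d\ge M$ and by $|\eta_0'|>0$ on $\overline{(0,1)\setminus\omega'}$ from Lemma~\ref{lem}. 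All the remaining interior terms carry strictly fewer powers of $s$ or $\lambda$ and are reserved for absorption at the end.

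The crucial and nonstandard step is the treatment of the spatial boundary terms at $x=0,1$, which do not vanish here as they would under Dirichlet or static Neumann conditions. The idea is to run a second, one-dimensional Carleman computation \emph{on the boundary} simultaneously with the interior one. Conjugating the boundary equation $K\varphi(t,j)=-g_j$ through $w(\cdot,j)=e^{-s\alpha(\cdot)}\varphi(\cdot,j)$ and using that on the boundary $\partial_x\alpha = -\lambda\xi\eta_0'$ together with the sign condition $(-1)^{j+1}\eta_0'(j)<0$, the normal-derivative boundary terms produced by the interior integration by parts can be matched against the terms generated by the boundary equation. The strict sign of $\partial_\nu\eta_0(j)$ is exactly what makes the leftover boundary quantities either carry a favourable sign or reproduce the boundary integrals $s^3\lambda^3\int_0^T\xi^3 e^{-2s\alpha}|\varphi(t,j)|^2$ and $s\lambda\int_0^T\xi e^{-2s\alpha}|\partial_x\varphi(t,j)|^2$ that must appear on the left of \eqref{Carleman}. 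I expect this coupling of the interior and boundary Carleman identities, and in particular the bookkeeping of the cross terms between $\partial_t\varphi(t,j)$, $\partial_x\varphi(t,j)$ and $\varphi(t,j)$ dictated by the dynamic condition, to be the main obstacle; keeping the powers of $s$ and $\lambda$ consistent on the boundary and in the interior is the delicate point.

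Finally, I would fix $\lambda\ge\lambda_1$ large first and then $s\ge s_1$ large to absorb every remainder term (including the zeroth-order contributions of $a$, $b_0$, $b_1$, controlled by $R$) into the dominant interior and boundary integrals. Since the gradient weight degenerates where $\eta_0'$ vanishes, i.e.\ on $\omega'$, I would insert a smooth cutoff supported in $\omega$ and equal to one on $\omega'$, integrate by parts once more, and thereby replace the missing contribution by the localized observation term $s^3\lambda^4\int_{(0,T)\times\omega}\xi^3 e^{-2s\alpha}|\varphi|^2$ on the right-hand side. To conclude, I would add back controlled multiples of $\|P_1 w\|^2$ and $\|P_2 w\|^2$ and use the equation together with Remark~\ref{rmk} to recover the first two interior terms $s^{-1}\int_{Q_T}\xi^{-1}e^{-2s\alpha}\big(|\partial_t\varphi|^2+|\partial_x(d\partial_x\varphi)|^2\big)$ and the boundary term $s^{-1}\sum_{j=0,1}\int_0^T\xi^{-1}e^{-2s\alpha}|\partial_t\varphi(t,j)|^2$, a routine final manipulation once the dominant estimate is in hand.
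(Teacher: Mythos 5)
Your proposal is correct and follows essentially the same route as the paper's proof: the conjugation $\psi=e^{-s\alpha}\varphi$, the splitting of both the conjugated interior operator and the conjugated dynamic boundary operator into (roughly) self-adjoint and skew-adjoint parts, the expansion of the cross terms with the boundary contributions from the interior integrations by parts matched against those generated by the boundary operator via the sign condition $\partial_\nu\eta_0(j)<0$, absorption of remainders for $\lambda$ and $s$ large, the cutoff supported in $\omega$ to convert the local gradient term into the localized observation term, and the final recovery of the $\partial_t\varphi$ and $\partial_x(d\partial_x\varphi)$ terms. No substantive differences from the paper's argument.
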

\begin{proof}
 We shall present the strategy of the proof step by step following some ideas in \cite{3} in a modified form. By a density argument, it suffices to consider smooth solutions $ \varphi \in C^\infty([0,T]\times [0,1])$.
We recall that the constant $C$ is generic and may change from a line to another.

\textbf{Step 1. Change of variables.}
Let $\varphi\in C^\infty([0,T]\times [0,1])$, $\lambda \ge 1$ and $s \ge 1.$ 
We will define the operators to conjugate in $(0,T)\times(0,1)$ and $(0,T) \times \{0,1\}$. For this matter, we consider the variable change $\varphi=e^{s \alpha} \psi$ and we compute $$M\psi=e^{-s\alpha} L(e^{s\alpha} \psi) \, \text{ and } \, N\psi(t,j)= e^{-s\alpha} K(e^{s\alpha} \psi(t,j)),\qquad j=0,1.$$
Adopting the same decomposition in \cite{3}, we rewrite the operators as
\begin{equation}\label{(3.7)}
 M_1\psi + M_2\psi = \tilde f \quad \text{in }Q_T, \qquad 
  N_1\psi(t,j) + N_2\psi(t,j) = \tilde g_j \quad \text{for }j=0,1,
\end{equation}
with the abbreviations
\begin{align}\label{dec1}
  M_1\psi &= - s  \lambda^2 \xi \eta_0'^2 d(x) \psi - 2s \lambda \xi \eta_0' d(x) \partial_x \psi + \partial_t\psi = \sum_{i=1,2,3}(M_1\psi)_i,\\ 
  M_2\psi  &= s^2\lambda^2 \xi^2 \eta_0'^2 d(x) \psi + \partial_{x}(d(x) \partial_x \psi) + s\psi \partial_t\alpha 
            = \sum_{k=1,2,3}(M_2\psi)_k,\notag\\
\tilde f & = e^{-s \alpha} L\varphi  + s \lambda \xi \eta_0'' d(x)\psi + s \lambda \xi \eta'_0 \partial_x d(x) \psi+ a\psi,\notag\\
N_1\psi(t,j) & =  \partial_t\psi(t,j) + s\lambda \xi(t) \partial_\nu \eta_0(j) d(j)\psi(t,j)  = \sum_{i=1,2}(N_1\psi(t,j))_i,\notag\\
N_2\psi(t,j) & =  s\psi(t,j) \partial_t\alpha(t) - (-1)^{j+1} d(j)\partial_x \psi(t,j)
     =\sum_{k=1,2}(N_2\psi(t,j))_k, \notag
\end{align}
\begin{align*}
\tilde g_{j}(t) &=e^{-s \alpha} K\varphi(t,j) + b_{j} \psi(t,j) \qquad j=0,1. 
\end{align*}
Applying the norms $\|\cdot\|_{L^2(Q_T)}^2$ (resp.\ $\|\cdot\|_{L^2(0,T)}^2$) to the equations in \eqref{(3.7)} and adding the resulting identities, we obtain
\begin{align}
\|\tilde f  \|_{L^2(Q_T)}^2 &+ \sum_{j=0,1}  \|\tilde g_j \|_{L^2(0,T)}^2 
 = \|M_1\psi\|_{L^2(Q_T)}^2 + \|M_2\psi\|_{L^2(Q_T)}^2 \notag \\
 & + \sum_{j=0,1} \|N_1\psi(\cdot,j)\|_{L^2(0,T)}^2+\sum_{j=0,1} \|N_2\psi(\cdot,j)\|_{L^2(0,T)}^2 \label{(3.8)} \\ 
  &+ 2 \sum_{i,k=1,2,3} \langle  (M_1\psi)_i, (M_2\psi)_k\rangle_{L^2(Q_T)} \notag \\
  &+ 2 \sum_{i,k=1,2} \sum_{j=0,1} \langle (N_1\psi(\cdot,j))_i, (N_2\psi(\cdot,j))_k\rangle_{L^2(0,T)}.  \notag 
\end{align}

\textbf{Step 2. Calculating the scalar product in \eqref{(3.8)}.}
\textbf{Step 2a.}  We start with the negative term
$$\langle (M_1\psi)_1, (M_2\psi)_1\rangle
       = - s^3\lambda^4 \int_{Q_T} \eta_0'^4 \xi^3d^2 \vert \psi \vert^2 \,d x\, d t.$$
Using integration by parts and  Remark \ref{rmk}, we further derive
\begin{align*}
 &\langle (M_1\psi)_2\, ,(M_2\psi)_1\rangle
 = 3 s^3\lambda^4 \int_{Q_T} \xi^3 \eta_0'^4 d^2\vert \psi \vert^2\,d x\, d t +3 s^3\lambda^3 \int_{Q_T} \xi^3 \eta_0'^2 \eta_0'' d^2 \vert \psi \vert^2\,d x\, d t \\
&+2s^3\lambda^3 \int_{Q_T} \xi^3 \eta'^2_0 d \partial_x d \vert \psi \vert^2 \,d x\, d t - s^3\lambda^3 \sum_{j=0,1} \int_0^T \xi^3(t) (\partial_\nu \eta_0(j))^3 d(j)\vert \psi(t,j) \vert^2 \, d t.   
\end{align*}
Integrating by parts in time, we obtain
\begin{align*}
 \langle (M_1\psi)_3, (M_2\psi)_1\rangle
 & = \frac{1}{2} s^2\lambda^2 \int_{Q_T} \eta_0'^2 \xi^2 d\partial_t (\vert \psi \vert^2) \, dx \, dt\\
  & = - s^2\lambda^2 \int_{Q_T} \eta_0'^2  \partial_t \xi \xi d \vert \psi \vert^2 \, dx \, dt.
\end{align*}
Hence, 
\begin{align*}
&\sum_{i=1,2,3} \langle (M_1\psi)_i, (M_2\psi )_1\rangle = 2 s^3\lambda^4  \int_{Q_T} \eta_0'^4 \xi^3 d^2\vert \psi \vert^2 \,d x\, d t\notag \\
&  +3 s^3\lambda^3 \int_{Q_T} \xi^3 \eta_0'^2 \eta_0'' d^2 \vert \psi \vert^2\,d x\, d t +2s^3\lambda^3 \int_{Q_T} \xi^3 \eta'^2_0 d \partial_x d \vert \psi \vert^2 \,d x\, d t\\
&  - s^2\lambda^2 \int_{Q_T} \eta_0'^2  \partial_t \xi \xi d \vert \psi \vert^2 \, dx \, dt  - s^3\lambda^3 \sum_{j=0,1} \int_0^T \xi^3(t)(\partial_\nu \eta_0(j))^3 d(j)\vert \psi(t,j) \vert^2 \, d t.
\end{align*}

\textbf{Step 2b.} Integration by parts yields
\begin{align*}
 \langle (M_1\psi)_1, (M_2\psi)_2\rangle
 &= - s\lambda^2 \int_{Q_T} \eta_0'^2 \xi d\psi \partial_{x}(d(x)\partial_x \psi)  \, dx\, dt \\
     &= s\lambda^2 \int_{Q_T} \eta_0'^2 \xi   d^2\vert \partial_x \psi \vert^2  \, dx\, dt  
    +2  s\lambda^2 \int_{Q_T} \xi \eta_0' \eta_0''d^2 \partial_x \psi \psi \, dx\, dt 
 \end{align*}
 \begin{align*}
 & +s\lambda^3 \int_{Q_T}\xi \eta_0'^3 d^2 \psi \partial_x \psi  \,dx\,dt +s \lambda \int_{Q_T} \xi \eta'^2_0 d \partial_x d \psi \partial_x \psi \,dx\,dt\\
&- s\lambda^2 \sum_{j=0,1} \int_0^T \eta_0'^2(j) \xi(t)d^2(j)\psi(t,j)\partial_\nu \psi(t,j)\, dt.
\end{align*}
On the other hand, we have
\begin{align*}
\langle (M_1\psi)_2, &(M_2\psi)_2\rangle
      = -2s\lambda \int_{Q_T}  \eta_0'  \xi  \partial_{x}(d(x)\partial_{x} \psi) d(x)\partial_x \psi \, dx \, d t \notag \\
   & = s\lambda \int_{Q_T} \xi \eta_0''d^2 \vert\partial_x \psi\vert^2 \, dx \, d t  + s\lambda^2 \int_{Q_T}  \xi  \eta_0'^2 d^2 \vert\partial_x \psi\vert^2 \, dx \, d t \\
   &\qquad-s\lambda \sum_{j=0,1} \int_{0}^{T} \xi(t) \partial_\nu \eta_0(j) d^2(j)\vert\partial_x \psi(t,j)\vert^2\, d t. \notag
\end{align*}
$\partial_x \psi$ vanishes at $t = 0$ and $t = T$. So, we obtain
\begin{align*}
 \langle (M_1\psi)_3, (M_2\psi)_2 \rangle
 &= \int_{Q_T} \partial_t \psi \partial_{x} (d(x) \partial_{x}\psi) \, dx \,d t \notag\\
 &= \sum_{j=0,1}\int_{0}^{T} \partial_t \psi(t,j) d(j) \partial_\nu \psi(t,j) \,dt.
\end{align*}
All this yield
\begin{align*}
&\sum_{i=1,2,3} \langle (M_1\psi)_i, (M_2\psi)_2\rangle = 2 s\lambda^2 \int_{Q_T} \eta_0'^2 \xi   d^2\vert \partial_x \psi \vert^2  \, dx\, dt  +s\lambda \int_{Q_T} \xi \eta_0''d^2 \vert\partial_x \psi\vert^2 \, dx \, d t 
    \notag\\
& +s\lambda^3 \int_{Q_T}\xi \eta_0'^3 d^2 \psi \partial_x \psi  \,dx\,dt +s \lambda \int_{Q_T} \xi \eta'^2_0 d \partial_x d \psi \partial_x \psi \,dx\,dt\\
&+2  s\lambda^2 \int_{Q_T} \xi \eta_0' \eta_0''d^2 \psi \partial_x \psi \, dx\, dt - s\lambda^2 \sum_{j=0,1} \int_0^T \eta_0'^2(j) \xi(t)d^2(j)\psi(t,j)\partial_\nu \psi(t,j)\, dt \notag \\
  &+\sum_{j=0,1}\int_{0}^{T} \partial_t \psi(t,j) d(j) \partial_\nu \psi(t,j) \,dt  -s\lambda \sum_{j=0,1} \int_{0}^{T} \xi(t) \partial_\nu \eta_0(j) d^2(j)\vert\partial_x \psi(t,j)\vert^2\, d t\notag.
\end{align*}

\textbf{Step 2c.} We have,
\begin{align*}
\langle (M_1\psi)_1&, (M_2\psi)_3\rangle
 = - s^2\lambda^2 \int_{Q_T} \eta_0'^2 \partial_t \alpha \xi d\vert \psi \vert^2 \, dx\, dt.
\end{align*}
By integrating by parts and making use of \eqref{eta}, we obtain
\begin{align*}
\langle (M_1\psi)_2, (M_2\psi)_3\rangle
    &= - s^2 \lambda \int_{Q_T} \partial_t \alpha \xi \eta_0' d \partial_x ( \vert \psi \vert^2) \, dx\, dt \\
    & = s^2 \lambda \int_{Q_T} \partial_x (\partial_t \alpha) \eta_0' \xi d \vert \psi \vert^2 \, dx\, dt+ s^2 \lambda^2 \int_{Q_T} \partial_t \alpha  \eta_0'^2 \xi d\vert \psi \vert^2 \, dx\, dt \\
    \end{align*}
    \begin{align*}
 & +  s^2 \lambda \int_{Q_T} \partial_t \alpha \eta_0'' \xi d \vert \psi \vert^2 \, dx\, dt + s^2 \lambda \int_{Q_T} \xi \eta'_0 \partial_t \alpha \partial_x d \vert \psi \vert^2 \, dx\, dt\\
 & - s^2 \lambda \sum_{j=0,1} \int_{0}^{T} \partial_t \alpha(t) \xi(t) \partial_\nu \eta_0(j) d(j)\vert \psi(t,j) \vert^2  \, dt. 
   \end{align*}
Integrating by parts with respect to time, we derive
\begin{align*}
\langle (M_1\psi)_3, (M_2\psi)_3\rangle
  &= \frac{s}{2} \int_{Q_T} \partial_t \alpha \partial_t ( \vert \psi \vert^2) \, d x \,d t 
   =  - \frac{s}{2} \int_{Q_T} \partial_{tt} \alpha \vert \psi \vert^2\, d x \,d t.
\end{align*}
This provides 
\begin{align*}
&\sum_{i=1,2,3} \langle (M_1\psi)_i, (M_2\psi)_3\rangle =s^2 \lambda  \int_{Q_T} \partial_x (\partial_t \alpha) \eta_0' \xi d \vert \psi \vert^2 \, dx\, dt\notag \\
&  +  s^2 \lambda  \int_{Q_T} \partial_t \alpha \xi \eta_0''d \vert \psi \vert^2 \, dx\, dt + s^2 \lambda \int_{Q_T} \xi \eta'_0 \partial_t \alpha \partial_x d \vert \psi \vert^2 \, dx\, dt\\
&- \frac{s}{2}  \int_{Q_T} \partial_{tt} \alpha \vert \psi \vert^2\, dx \,dt- s^2 \lambda \sum_{j=0,1} \int_{0}^{T} \partial_t \alpha(t) \xi(t)\partial_\nu \eta_0(j) d(j)\vert \psi(t,j) \vert^2  \, dt. \notag 
\end{align*}
\textbf{Step 2d.} Next, we consider the boundary terms $N_1$ and $N_2$.
\begin{align*}
\sum_{j=0,1} \langle (N_1\psi(\cdot,j))_1, (N_2\psi(\cdot,j))_1
 &= \frac{s}{2}  \sum_{j=0,1} \int_{0}^{T} \partial_t\alpha(t) \partial_t(\vert \psi(t,j) \vert^2)\,dt, \\
    &=-\frac{s}{2} \sum_{j=0,1} \int_{0}^{T} \partial_{tt}\alpha(t) \vert \psi(t,j) \vert^2 \,dt.
\end{align*}
\begin{align*}
\sum_{j=0,1} \langle (N_1\psi(\cdot,j))_2, (N_2\psi(\cdot,j))_1\rangle
 =  s^2\lambda \sum_{j=0,1}\int_{0}^{T} \partial_t \alpha(t) \xi(t) \partial_\nu \eta_0(j) d(j) \vert \psi(t,j) \vert^2\, dt.
\end{align*}
Finally, we have the summand
\begin{align*}
\sum_{j=0,1} \langle (N_1\psi(\cdot,j)&)_1, (N_2\psi(\cdot,j))_2\rangle
     = -\sum_{j=0,1}\int_{0}^{T} d(j) \partial_t \psi(t,j) \partial_\nu \psi(t,j)\, dt,
\end{align*}
and
\begin{align*}
 \sum_{j=0,1} \langle (N_1\psi(\cdot,j))_2, (N_2\psi(\cdot,j))_2\rangle
     =- s \lambda \sum_{j=0,1} \int_{0}^{T}  \partial_\nu \eta_0(j) \xi(t) d^2(j) \psi(t,j) \partial_\nu \psi(t,j) \, dt.
\end{align*}
Thus, 
\begin{align*}
&\sum_{i,k=1,2} \sum_{j=0,1} \langle (N_1\psi(\cdot,j))_i, (N_2\psi(\cdot,j))_k\rangle =  \\
& -\frac{s}{2} \sum_{j=0,1} \int_{0}^{T} \partial_{tt}\alpha(t) \vert \psi(t,j) \vert^2 \,dt  - \sum_{j=0,1}\int_{0}^{T}d(t,j) \partial_t \psi(t,j) \partial_\nu \psi(t,j)\, dt\\
\end{align*}
\begin{align*}
&+ s^2\lambda \sum_{j=0,1} \int_{0}^{T} \partial_t \alpha(t) \xi(t) \partial_\nu \eta_0(j) d(j)\vert \psi(t,j) \vert^2\, dt 
 \\& - s \lambda \sum_{j=0,1} \int_{0}^{T} \partial_\nu \eta_0(j) \xi(t) d^2(j) \psi(t,j) \partial_\nu \psi(t,j) \, dt. \notag
\end{align*}

\textbf{Step 3. Estimating the terms from below.} We gather the ultimate equalities in Steps { \bf 2a--2d}.
\begin{align}\label{total}
&\sum_{i,k=1,2,3} \langle (M_1\psi)_i, (M_2\psi)_k\rangle+\sum_{i,k=1,2} \sum_{j=0,1} \langle (N_1\psi(\cdot,j))_i, (N_2\psi(\cdot,j))_k\rangle \notag \\
&=2 s^3\lambda^4  \int_{Q_T} \eta_0'^4 \xi^3 d^2\vert \psi \vert^2 \,d x\, d t +3 s^3\lambda^3 \int_{Q_T} \xi^3 \eta_0'^2 \eta_0'' d^2 \vert \psi \vert^2\,d x\, d t \notag\\
 &+ 2 s^3 \lambda^3 \int_{Q_T} \xi^3 \eta'^2_0 d \partial_x d \vert \psi \vert^2 \, dx \, dt- s^2\lambda^2 \int_{Q_T} \eta_0'^2  \partial_t \xi \xi d \vert \psi \vert^2 \, dx \, dt \notag \\
& + s^2 \lambda \int_{Q_T} \partial_t \alpha \xi \eta_0'' d \vert \psi \vert^2 \, dx\, dt +s^2 \lambda\int_{Q_T} \xi \eta'_0 \partial_t \alpha \partial_x d \vert \psi \vert^2 \, dx\, dt\notag \\
&+s^2 \lambda \int_{Q_T} \partial_x (\partial_t \alpha) \eta_0' \xi d \vert \psi \vert^2 \, dx\, dt  - \frac{s}{2} \int_{Q_T} \partial_{tt} \alpha \vert \psi \vert^2\, d x \,d t \notag \\
 &+2s \lambda^2 \int_{Q_T} \eta_0'^2 \xi d^2 \vert \partial_x \psi \vert^2 \, dx \,d t + s \lambda \int_{Q_T} \xi \eta_0'' d^2 \vert \partial_x \psi \vert^2 \, dx \,d t \notag \\
& +2 s \lambda^2 \int_{Q_T} \xi \eta_0' \eta_0'' d^2 \psi \partial_x \psi \, dx \,d t + s \lambda^3 \int_{Q_T} \xi \eta_0'^3 d^2 \psi \partial_x \psi \, dx \,d t  \\
&+ s\lambda \int_{Q_T} \xi \eta'^2_0 d \partial_x d \psi \partial_x \psi \, dx\, dt - s^3\lambda^3 \sum_{j=0,1} \int_{0}^{T} \xi^3(t) (\partial_\nu \eta_0(j))^3 d(j) \vert \psi(t,j) \vert^2 \, dt \notag\\
&-\frac{s}{2} \sum_{j=0,1} \int_{0}^{T} \partial_{tt}\alpha(t) \vert \psi(t,j) \vert^2 \,dt - s\lambda \sum_{j=0,1}\int_{0}^{T} \xi(t) \partial_\nu \eta_0(j) d^2(j) \vert\partial_x \psi(t,j)\vert^2\, dt \notag \\
& -  s\lambda^2 \sum_{j=0,1} \int_{0}^{T}\eta_0'^2 (j) \xi(t) d^2(j)\psi(t,j)\partial_\nu \psi(t,j) \, dt \notag \\
&-s \lambda\sum_{j=0,1} \int_{0}^{T}\partial_\nu \eta_0(j) \xi(t) d^2(j)\psi(t,j)\partial_\nu \psi(t,j) \, dt =\sum_{l=1}^{13} I_l + \sum_{l=1}^{5} B_l. \notag
\end{align}
To estimate those terms, we will use the following basic pointwise estimates on $[0,1]$
\begin{align}\label{(3.9)}
 &e^{2\lambda m \Vert \eta_0 \Vert_\infty} \leq e^{2 \lambda (m \Vert \eta_0 \Vert_\infty + \eta_0(x))},\quad 
 \vert \partial_x \alpha \vert ,\, \vert \partial_x \xi \vert \leq C \lambda \xi, \quad \vert \partial_t \alpha \vert, \, \vert\partial_t \xi \vert\leq C T \xi^2\notag \\
 & \vert \partial_{tt} \alpha \vert \leq  CT^2 \xi^3,\quad \vert\partial_x(\partial_t \alpha) \vert \leq C \lambda T \xi^2 \quad \text{ and } \quad \vert \partial_{xx} \alpha\vert \leq C \lambda^2 \xi, \text{ for } \lambda \geq 1,
\end{align}
as well as the fact that $d \in W^{1,\infty}(0,1)$.
From Lemma \ref{lem}, there exists a constant $C >0 $ such that 
\begin{align*}
\sum_{l=1}^{3}I_l & \geq 2 s^3\lambda^4  \int_{(0,T)\times (0,1)\setminus \omega'} \eta_0'^4 \xi^3 d^2\vert \psi \vert^2 \,dx\, dt +3 s^3\lambda^3 \int_{Q_T} \xi^3 \eta_0'^2 \eta_0''d^2 \vert \psi \vert^2\,d x\, d t \\
&+ 2 s^3 \lambda^3 \int_{Q_T} \xi^3 \eta'^2_0 d \partial_x d \vert \psi \vert^2 \, dx \, dt
\end{align*}
\begin{align*}
&\geq  C s^3\lambda^4  \int_{(0,T)\times (0,1)\setminus\omega'}  \xi^3 \vert \psi \vert^2 \,dx\, dt - C s^3\lambda^3 \int_{Q_T} \xi^3 \vert \psi \vert^2\,dx\, dt \\
&\geq  C s^3\lambda^4  \int_{Q_T}  \xi^3 \vert \psi \vert^2 \,dx\, dt - C s^3\lambda^4  \int_{(0,T)\times \omega'}  \xi^3 \vert \psi \vert^2 \,dx\, dt \\
&- C s^3\lambda^3 \int_{Q_T} \xi^3 \vert \psi \vert^2\,dx\, dt  \\
&\geq C s^3\lambda^4  \int_{Q_T}  \xi^3 \vert \psi \vert^2 \,dx\, dt  -  C s^3\lambda^4  \int_{(0,T)\times \omega'}  \xi^3 \vert \psi \vert^2 \,dx\, dt
\end{align*}
for $\lambda \geq \lambda_0 $ and $s \geq s_0 $ large enough. We finally obtain, from Lemma \ref{lem}
\begin{align*}
\sum_{l=1}^{3}I_l+B_1 &\geq C s^3\lambda^4  \int_{Q_T}  \xi^3 \vert \psi \vert^2 \,dx\, dt  - 2 C s^3\lambda^4  \int_{(0,T)\times \omega'}  \xi^3 \vert \psi \vert^2 \,dx\, dt \notag \\ 
&+ C s^3\lambda^3 \sum_{j=0,1} \int_0^T \xi^3(t) \vert \psi(t,j) \vert^2 \, dt.
\end{align*}
From the estimations in \eqref{(3.9)}, we obtain 
\begin{align*}
\sum_{l=4}^{8}I_l+ B_2 & \geq -Cs^2\lambda^2(T+T^2) \int_{Q_T}  \xi^3 \vert \psi \vert ^2 \,dx\, dt -C s  T^2  \sum_{j=0,1} \int_{0}^{T} \xi^3(t) \vert \psi(t,j) \vert ^2\,dt
\end{align*}
for $s \geq 1$ and $\lambda \geq 1$. Using again Lemma \ref{lem}, we obtain 
\begin{align*}
I_{9}+I_{10} & \geq C s \lambda^2 \int_{Q_T}  \xi \vert \partial_x \psi \vert^2 \, dx \,dt- C s \lambda^2 \int_{(0,T)\times\omega'}  \xi d^2\vert \partial_x \psi\vert^2 \, dx \,dt.
\end{align*}
Next, we apply Young's inequality to $I_{11}$, $I_{12}$ and $I_{13}$.
\begin{align*}
\vert I_{11}\vert  
&\leq C s \lambda^4 \int_{Q_T}  \xi \vert \psi\vert^2 \, dx \,dt + C s \int_{Q_T}  \xi \vert \partial_x \psi \vert^2 \, dx \,dt, 
\end{align*}
\begin{align*}
\vert I_{12}\vert 
&\leq C s^2 \lambda^4 \int_{Q_T}  \xi^2 \vert \psi\vert^2 \, dx \,dt + C \lambda^2 \int_{Q_T}  \vert \partial_x \psi\vert^2 \, dx \,dt 
\end{align*}
and
\begin{align*}
\vert I_{13}\vert 
&\leq C s^2 \int_{Q_T}  \xi^2 \vert \psi\vert^2 \, dx \,dt + C \lambda^2 \int_{Q_T}  \vert \partial_x \psi\vert^2 \, dx \,dt .
\end{align*}
It follows that
\begin{align*}
\sum_{l=11}^{13}I_{l} &\geq - C s \lambda^4 \int_{Q_T}  \xi \vert \psi\vert^2 \, dx \,dt - C s \int_{Q_T}  \xi \vert \partial_x \psi\vert^2 \, dx \,dt \\
&-C s^2 \lambda^4 \int_{Q_T}  \xi^2 \vert \psi\vert^2 \, dx \,dt - C \lambda^2 \int_{Q_T}  \vert \partial_x \psi\vert^2 \, dx \,dt.
\end{align*}
Using Young's inequality again on $B_4$ and $B_5$, we obtain for $ \varepsilon >  0 $ small enough 

\begin{align*}
\vert B_4\vert 
& \leq C_\varepsilon s \lambda^3 \sum_{j=0,1} \int_{0}^{T} \xi(t) \vert\psi(t,j)\vert^2 \, dt+ C \varepsilon s \lambda \sum_{j=0,1} \int_{0}^{T} \xi(t) \vert\partial_x\psi(t,j)\vert^2 \, dt,
\end{align*}
and 
\begin{align*}
\vert B_5\vert
& \leq C_\varepsilon s \lambda  \sum_{j=0,1} \int_{0}^{T} \xi(t)\vert \psi(t,j)\vert^2 \, dt+ C \varepsilon s \lambda \sum_{j=0,1} \int_{0}^{T} \xi(t) \vert \partial_x\psi(t,j)\vert^2 \, dt.
\end{align*}
It follows that
\begin{align*}
B_4+B_5 \geq - C_\varepsilon s \lambda^3 \sum_{j=0,1} \int_{0}^{T} \xi(t)\vert\psi(t,j)\vert^2 \, dt -C \varepsilon s\lambda \sum_{j=0,1} \int_{0}^{T} \xi(t) \vert \partial_x\psi(t,j)\vert^2 \, dt
\end{align*}
for $\lambda \geq 1$. Subsequently, we estimate equality  \eqref{total} from below
\begin{align*}
&\sum_{i,k=1,2,3} \langle (M_1\psi)_i, (M_2\psi)_k\rangle+\sum_{i,k=1,2} \sum_{j=0,1} \langle (N_1\psi(\cdot,j))_i, (N_2\psi(\cdot,j))_k\rangle  \\
&\geq  C s^3 \lambda^4 \int_{Q_T} \xi^3 \vert\psi\vert^2 \, dx \, dt -C s^2 \lambda^2 (T+T^2)\int_{Q_T} \xi^3 \vert\psi\vert^2 \, dx \, dt \\
& - C s \lambda^4 \int_{Q_T} \xi \vert\psi\vert^2 \, dx \, dt- C s^2 \lambda^4 \int_{Q_T} \xi^2 \vert\psi\vert^2 \, dx \, dt \\ 
&-C s^3 \lambda^4 \int_{(0,T)\times \omega'} \xi^3 \vert\psi\vert^2 \, dx \, dt + C s \lambda^2 \int_{Q_T} \xi \vert\partial_x \psi\vert^2 \, dx \, dt \\ 
&-C s \lambda \int_{Q_T} \xi \vert\partial_x \psi\vert^2 \, dx \, dt-C \int_{Q_T} (s\xi+ \lambda^2) \vert\partial_x \psi\vert^2 \, dx \,  dt \\
&  -  C s \lambda^2 \int_{(0,T)\times \omega'} \xi d^2 \vert\partial_x \psi\vert^2 \, dx \, dt+ Cs^3 \lambda^3 \sum_{j=0,1} \int_{0}^{T} \xi^3(t) \vert\psi(t,j)\vert^2 \,dt\\
& - C s  T^2 \sum_{j=0,1} \int_{0}^{T} \xi^3(t) \vert\psi(t,j)\vert^2 \,dt  - C_\varepsilon s \lambda^3 \sum_{j=0,1} \int_{0}^{T} \xi(t) \vert\psi(t,j)\vert^2 \,dt \\ 
& + Cs \lambda  \sum_{j=0,1} \int_{0}^{T} \xi(t) \vert\partial_x \psi(t,j)\vert^2 \,dt -  C\varepsilon s \lambda \sum_{j=0,1} \int_{0}^{T} \xi(t) \vert\partial_x \psi(t,j)\vert^2 \,dt.
\end{align*}
Hence, we obtain the inequality
\begin{align*}
&\sum_{i,k=1,2,3} \langle (M_1\psi)_i, (M_2\psi)_k\rangle+\sum_{i,k=1,2} \sum_{j=0,1} \langle (N_1\psi(\cdot,j))_i, (N_2\psi(\cdot,j))_k\rangle  \\
&\geq  C s^3 \lambda^4 \int_{Q_T} \xi^3 \vert \psi \vert^2 \, dx \, dt + C s \lambda^2 \int_{Q_T} \xi \vert\partial_x \psi\vert^2 \, dx \, dt \\
& +C s^3 \lambda^3 \sum_{j=0,1} \int_{0}^{T} \xi^3(t) \vert\psi(t,j)\vert^2 \,dt + C s \lambda \sum_{j=0,1} \int_{0}^{T} \xi(t) \vert\partial_x \psi(t,j)\vert^2 \,dt  \\
&  -C s^3 \lambda^4 \int_{(0,T)\times \omega'} \xi^3 \vert\psi\vert^2 \, dx \, dt - C s \lambda^2 \int_{(0,T)\times \omega'} \xi d^2 \vert\partial_x \psi\vert^2 \, dx \, dt
\end{align*}
for $s \geq  C(T+T^2)$ and $\lambda \geq \lambda_0$ large enough.

\textbf{Step 4. The transformed estimate.} Combining the above estimation with \eqref{(3.8)}, we obtain
\begin{align*}
\|\tilde f\|_{L^2(Q_T)}^2 &+ \sum_{j=0,1}  \|\tilde g_j \|_{L^2(0,T)}^2 \geq\|M_1\psi\|_{L^2(Q_T)}^2 + \|M_2\psi\|_{L^2(Q_T)}^2 \notag \\
  & + \sum_{j=0,1} \|N_1\psi(\cdot,j)\|_{L^2(0,T)}^2 +\sum_{j=0,1} \|N_2\psi(\cdot,j)\|_{L^2(0,T)}^2 \notag \\
  &+  C s^3 \lambda^4 \int_{Q_T} \xi^3 \vert\psi\vert^2 \, dx \, dt + C s \lambda^2 \int_{Q_T} \xi \vert\partial_x \psi\vert^2 \, dx \, dt \\
& +C s^3 \lambda^3 \sum_{j=0,1} \int_{0}^{T} \xi^3(t) \vert\psi(t,j)\vert^2 \,dt + C s \lambda \sum_{j=0,1} \int_{0}^{T} \xi(t) \vert\partial_x \psi(t,j)\vert^2 \,dt  \notag \\
&  -C s^3 \lambda^4 \int_{(0,T)\times \omega'} \xi^3 \vert\psi\vert^2 \, dx \, dt - C s \lambda^2 \int_{(0,T)\times \omega'} \xi d^2 \vert\partial_x \psi\vert^2 \, dx \, dt. \notag
\end{align*}
The expressions for $\tilde f$ and $\tilde g_j$ lead to additional lower order terms which can be absorbed to the left-hand side for large $\lambda$ and $s$. Indeed, we obtain
\begin{align*}
    \|\tilde f\|_{L^2(Q_T)}^2 
    &\leq C \int_{Q_T} e^{-2s \alpha}\vert L\varphi\vert^2 \, dx dt+ C s^2 \lambda^2 \int_{Q_T}  \xi^2 \vert \psi \vert^2  \, dx dt+ C \int_{Q_T} \vert \psi\vert^2 \, dx dt
\end{align*}
and 
\begin{align*}
   \sum_{j=0,1}  \|\tilde g_j \|_{L^2(0,T)}^2 
    &\leq C \sum_{j=0,1} \int_{0}^{T} e^{-2s \alpha}\vert K\varphi(t,j)\vert^2 \,dt+ C \sum_{j=0,1} \int_{0}^{T} \vert \psi(t,j)\vert^2 \, dt. 
\end{align*}
Thus, for $\lambda \geq \lambda_0$ and $s \geq C T^2$, we ultimately derive the following estimation.
\begin{align}\label{34}
&\|M_1\psi\|_{L^2(Q_T)}^2 + \|M_2\psi\|_{L^2(Q_T)}^2  + \sum_{j=0,1} \|N_1\psi(\cdot,j)\|_{L^2(0,T)}^2 +\sum_{j=0,1} \|N_2\psi(\cdot,j)\|_{L^2(0,T)}^2 \notag \\ 
&+ C s^3 \lambda^4 \int_{Q_T} \xi^3 \vert\psi\vert^2 \, dx \, dt +C s \lambda^2 \int_{Q_T} \xi \vert\partial_x \psi\vert^2 \, dx \, dt \\
&+ C s^3 \lambda^3 \sum_{j=0,1} \int_{0}^{T} \xi^3(t)\vert\psi(t,j)\vert^2 \,dt +C s \lambda \sum_{j=0,1} \int_{0}^{T} \xi(t) \vert\partial_x \psi(t,j)\vert^2 \,dt  \notag\\
&\leq C \int_{Q_T} e^{-2s \alpha}\vert L \varphi \vert^2 \, dx dt + C \sum_{j=0,1} \int_{0}^{T} e^{-2s \alpha}\vert K \varphi(t,j)\vert^2 \,dt  \notag \\
&+ C s^3 \lambda^4 \int_{(0,T)\times \omega'} \xi^3 \vert\psi\vert^2 \, dx \, dt +  C s \lambda^2 \int_{(0,T)\times \omega'} \xi d \vert\partial_x \psi\vert^2 \, dx \, dt. \notag 
\end{align}
On the other hand, from the decomposition \eqref{dec1}, 
We achieve
\begin{align}\label{35}
s^{-1}\int_{Q_T} \xi^{-1} \vert\partial_t \psi\vert^2 \, dx dt &\leq \|M_1\psi\|_{L^2(Q_T)}^2  +  C s\lambda^2 \int_{Q_T} \xi \vert \partial_x \psi \vert^2 \, d x\, d t \notag \\
&+ C s\lambda^4 \int_{Q_T} \xi \vert\psi\vert^2 \, d x\, d t
\end{align}
for $\lambda \geq 1$ and $s\geq C T^2$. Combining the inequalities \eqref{34} and \eqref{35}, we obtain 

\begin{align*}
&s^{-1}\int_{Q_T} \xi^{-1} \vert\partial_t \psi\vert^2 \, dx dt + \|M_2\psi\|_{L^2(Q_T)}^2   + \sum_{j=0,1} \|N_1\psi(\cdot,j)\|_{L^2(0,T)}^2   \\ 
  &+\sum_{j=0,1} \|N_2\psi(\cdot,j)\|_{L^2(0,T)}^2 + C s^3 \lambda^4 \int_{Q_T} \xi^3 \vert \psi\vert^2 \, dx \, dt + C s \lambda^2 \int_{Q_T} \xi \vert\partial_x \psi\vert^2 \, dx \, dt  \\
& +C s^3 \lambda^3 \sum_{j=0,1} \int_{0}^{T} \xi^3(t)\vert\psi(t,j)\vert^2 \,dt +C s \lambda \sum_{j=0,1} \int_{0}^{T} \xi(t) \vert\partial_x \psi(t,j)\vert^2 \,dt  \\
&\leq C \int_{Q_T} e^{-2s \alpha} \vert L \varphi \vert^2 \, dx dt + C \sum_{j=0,1} \int_{0}^{T} e^{-2s \alpha}\vert K \varphi(t,j)\vert^2 \,dt    \\
&\qquad\qquad + C s^3 \lambda^4 \int_{(0,T)\times \omega'} \xi^3 \vert\psi\vert^2 \, dx \, dt + C s \lambda^2 \int_{(0,T)\times \omega'} \xi d \vert\partial_x \psi\vert^2 \, dx \, dt. 
\end{align*} 
Similarly, one can handle the identical terms on $Q_T$ and in $(0,T) \times \{0,1 \}$, which yields to
\begin{align*}
&s^{-1}\int_{Q_T} \xi^{-1} \vert\partial_t \psi\vert^2 \, dx dt +  s^{-1}\int_{Q_T} \xi^{-1} \vert\partial_{x}(d(x) \partial_x \psi)\vert^2 \, dx dt   \\ 
 & +s^{-1} \sum_{j=0,1} \int_{0}^{T} \xi^{-1}(t) \vert \partial_t \psi(t,j)\vert^2 \,dt+s^3 \lambda^4 \int_{Q_T} \xi^3 \vert \psi \vert^2 \, dx \, dt + s \lambda^2 \int_{Q_T} \xi \vert \partial_x \psi\vert^2 \, dx \, dt  \\
& + s^3 \lambda^3 \sum_{j=0,1} \int_{0}^{T} \xi^3(t)\vert\psi(t,j)\vert^2 \,dt+ s \lambda \sum_{j=0,1}\int_{0}^{T} \xi(t) \vert\partial_x \psi(t,j)\vert^2 \,dt \\
&\leq C \int_{Q_T}e^{-2s \alpha} \vert L \varphi \vert^2 \, dx dt + C \sum_{j=0,1} \int_{0}^{T} e^{-2s \alpha} \vert K \varphi(t,j)\vert^2 \,dt \\
& +C s^3 \lambda^4 \int_{(0,T)\times \omega'} \xi^3 \vert\psi\vert^2 \, dx \, dt  +  C s \lambda^2 \int_{(0,T)\times \omega'} \xi d \vert\partial_x \psi\vert^2 \, dx \, dt, 
\end{align*}
for $\lambda \geq \lambda_0$ and $s \geq C(T+T^2)$. Also using $\omega'\Subset \omega$, we absorb the local derivative term on the right-hand side by the integral on $Q_T$ on the left-hand side. For this matter, we introduce the function 
$\rho$ defined by
$$\rho \in \mathcal{C}_c^2(\omega), \qquad \rho \equiv 1 \text{ in } \omega', \qquad 0 \leq \rho \leq 1. $$
Two integration by parts yield
\begin{align*}
&s\lambda^2 \int_{(0,T)\times \omega'}  \xi   d\vert\partial_x \psi \vert^2  \, dx\, dt  \leqslant s\lambda^2 \int_{(0,T)\times \omega} \rho  \xi   d \vert\partial_x \psi\vert^2  \, dx\, dt  \\
&= - s \lambda^2 \int_{(0,T)\times \omega} \xi\partial _x \rho d \partial_x \psi \psi  \, dx\, dt  
- s \lambda^3 \int_{(0,T)\times \omega}   \xi \rho  \eta'_0 d \partial_x \psi \psi  \, dx\, dt \\
&- s \lambda^2 \int_{(0,T)\times \omega} \rho  \xi   \partial_{x}(d(x) \partial_x \psi) \psi  \, dx\, dt.
\end{align*}
By the same manner, for $\varepsilon>0$ we obtain

\begin{align*}
    - s \lambda^2 \int_{(0,T)\times \omega} \xi\partial _x \rho d \partial_x \psi \psi  \, dx\, dt &\leq \varepsilon s \lambda^2 \int_{(0,T)\times \omega} \xi\vert   \partial_x \psi\vert^2  \, dx\, dt \\
    &+ C_{\varepsilon} s \lambda^2 \int_{(0,T)\times \omega} \xi\vert \psi\vert^2  \, dx\, dt
\end{align*}
and
\begin{align*}
    - s \lambda^2 \int_{(0,T)\times \omega}  \rho  \partial_x \xi d \partial_x \psi \psi  \, dx\, dt  &\leq \varepsilon s \lambda^2 \int_{(0,T)\times \omega} \xi \vert  \partial_x \psi\vert^2  \, dx\, dt \\
    &+ C_{\varepsilon} s \lambda^4 \int_{(0,T)\times \omega} \xi \vert\psi \vert^2  \, dx\, dt,
\end{align*}
also,
\begin{align*}
   - s \lambda^2 \int_{(0,T)\times \omega} \rho  \xi   \partial_{x}(d \partial_x \psi) \psi  \, dx\, dt  &\leq \varepsilon s^{-1} \int_{(0,T)\times \omega} \xi^{-1} \vert   \partial_{x}(d(x) \partial_x \psi) \vert^2  \, dx\, dt \\
   &+ C_{\varepsilon} s^3 \lambda^4 \int_{(0,T)\times \omega} \xi^3 \vert \psi\vert^2  \, dx\, dt.
\end{align*}
We finally obtain, for a sufficiently small $\varepsilon$

\begin{align}\label{46}
&s^{-1}\int_{Q_T} \xi^{-1} \vert\partial_t \psi\vert^2 \, dx dt + s^{-1}\int_{Q_T} \xi^{-1} \vert\partial_{x}(d(x) \partial_x \psi) \vert^2 \, dx dt \notag  \\ 
& + s^{-1} \sum_{j=0,1} \int_{0}^{T} \xi^{-1}(t) \vert\partial_t \psi(t,j)\vert^2 \,dt+ s^3 \lambda^4 \int_{Q_T} \xi^3 \vert\psi\vert^2 \, dx \, dt \notag \\ 
  &  + s \lambda^2 \int_{Q_T} \xi \vert\partial_x \psi\vert^2 \, dx \, dt+ s^3 \lambda^3 \sum_{j=0,1} \int_{0}^{T} \xi^3(t)\vert\psi(t,j)\vert^2 \,dt\\
  &+ s \lambda \sum_{j=0,1} \int_{0}^{T} \xi(t) \vert\partial_x \psi(t,j)\vert^2 \,dt \leq C \left( \int_{Q_T}e^{-2s\alpha} \vert L \varphi \vert^2 \, dx dt \right. \notag \\
& \left.+ \sum_{j=0,1} \int_{0}^{T} e^{-2s\alpha}\vert K\varphi(t,j) \vert^2 \,dt  + s^3 \lambda^4 \int_{(0,T)\times \omega} \xi^3 \vert\psi\vert^2 \, dx \, dt \right)\notag
\end{align}
for $\lambda \geq \lambda_0$ and $s \geq C(T+T^2)$. Finally, inserting   $ \psi=e^{-s \alpha} \varphi$ into \eqref{46}, we obtain
\begin{align*}
    & s^{-1}\int_{Q_T} \xi^{-1} e^{- 2s \alpha} \vert\partial_t \varphi\vert^2 \, dx dt +s^{-1}\int_{Q_T} \xi^{-1} e^{- 2s \alpha} \vert\partial_{x}(d(x) \partial_x \varphi) \vert^2 \, dx dt \\
&  
    +s^{-1}\sum_{j=0,1}\int_{0}^{T} \xi^{-1}(t) e^{- 2s \alpha(t)} \vert\partial_t \varphi(t,j)\vert^2 \,dt  +  s^3 \lambda^4 \int_{Q_T} \xi^3 e^{- 2s \alpha} \vert\varphi\vert^2 \, dx \, dt  \\
& +  s \lambda^2 \int_{Q_T} \xi e^{- 2s \alpha} \vert\partial_x \varphi\vert^2 \, dx \, dt +  s^3 \lambda^3 \sum_{j=0,1}\int_{0}^{T} \xi^3(t) e^{- 2s \alpha(t)} \vert\varphi(t,j)\vert^2 \,dt    
  \end{align*}
\begin{align*}
&+ s \lambda \sum_{j=0,1} \int_{0}^{T} \xi(t) e^{- 2s \alpha(t)} \vert\partial_x \varphi(t,j)\vert^2\,dt  \leq C \left( \int_{Q_T}e^{-2s\alpha} \vert L \varphi \vert^2 \, dx dt \right.\\
& \left. +  \sum_{j=0,1} \int_{0}^{T} e^{-2s\alpha}\vert K\varphi(t,j)\vert^2 \,dt + s^3 \lambda^4 \int_{(0,T)\times \omega} \xi^3 e^{- 2s \alpha} \vert\varphi\vert^2 \, dx \, dt \right)
\end{align*}
for $\lambda\geq \lambda_1=\max \{\lambda_0,1\}$ and $s \geq s_1=C(T+T^2)$.
\end{proof}

\section{Internal Null controllability}\label{sec5}
In this section, we utilize the Carleman estimate to establish null controllability for \eqref{1.1}. To achieve this, our initial step involves deriving an observability estimate for the backward system \eqref{2}.
\begin{proposition}
There is a constant $C>0$ such that for all $(\varphi_T, \gamma_0,\gamma_1) \in \L^2$ the mild solution $(\varphi, \varphi(\cdot,0),\varphi(\cdot,1))$ of the backward problem \eqref{2} satisfies
\begin{align*}
    \Vert \varphi(0,\cdot) \Vert^2_{L^2(0,1)}+ \sum_{j=0,1} \vert \varphi(0,j) \vert^2 \leq  C e^{(T+\frac{1}{T})} \int_{(0,T)\times \omega} \vert\varphi\vert^2 \, dx \, dt.
\end{align*}
Moreover, for $(y_0 ,\beta_0, \beta_1) \in \L^2$ the mild solution  $(y, y(\cdot,0),y(\cdot,1))$ of the forward problem \eqref{1.1} satisfies
\begin{align*}
    \Vert y(T,\cdot) \Vert^2_{L^2(0,1)}+ \sum_{j=0,1} \vert y(T,j) \vert^2 \leq   C e^{(T+\frac{1}{T})} \int_{(0,T)\times \omega} \vert y \vert^2 \, dx \, dt.
\end{align*}
\end{proposition}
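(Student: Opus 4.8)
The plan is to deduce both inequalities from the Carleman estimate \eqref{Carleman} of Lemma \ref{2*} combined with a dissipation argument for the energy in $\L^2$. First I would apply \eqref{Carleman} to the mild solution $(\varphi,\varphi(\cdot,0),\varphi(\cdot,1))$ of the backward adjoint system \eqref{2}. Since this solution satisfies $L\varphi=0$ in $Q_T$ and $K\varphi(\cdot,j)=0$ on $(0,T)$, the first two terms on the right-hand side of \eqref{Carleman} vanish; after fixing $\lambda=\lambda_1$ and $s=s_1=C(T+T^2)$ and retaining only the global zero-order terms on the left, I obtain
\begin{align*}
&s^3\lambda^4\int_{Q_T}\xi^3 e^{-2s\alpha}|\varphi|^2\,dx\,dt + s^3\lambda^3\sum_{j=0,1}\int_0^T \xi^3 e^{-2s\alpha}|\varphi(t,j)|^2\,dt\\
&\qquad\leq C\,s^3\lambda^4\int_{(0,T)\times\omega}\xi^3 e^{-2s\alpha}|\varphi|^2\,dx\,dt.
\end{align*}

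Next I would localize in time. On the subinterval $[T/4,3T/4]$ one has $t(T-t)\in[3T^2/16,T^2/4]$, so the weights $\xi$ and $e^{-2s\alpha}$ are bounded above and below by positive constants depending only on $T$, $\lambda_1$, $s_1$; with the choice $s_1\sim T^2$ these bounds are of order $e^{C(T+1/T)}$. Bounding $\xi^3 e^{-2s\alpha}$ from below on $[T/4,3T/4]$ on the left, and from above on all of $(0,T)$ on the right (the product stays bounded because $e^{-2s\alpha}\to0$ faster than $\xi^3\to\infty$ as $t\to0,T$), yields the intermediate observability
\begin{equation*}
\int_{T/4}^{3T/4}\Big(\|\varphi(t,\cdot)\|_{L^2(0,1)}^2+\sum_{j=0,1}|\varphi(t,j)|^2\Big)\,dt \leq C e^{C(T+\frac1T)}\int_{(0,T)\times\omega}|\varphi|^2\,dx\,dt.
\end{equation*}

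To propagate this down to $t=0$ I would use a dissipation estimate for $E(t):=\|\varphi(t,\cdot)\|_{L^2(0,1)}^2+\sum_{j=0,1}|\varphi(t,j)|^2=\|(\varphi(t,\cdot),\varphi(t,0),\varphi(t,1))\|_{\L^2}^2$. Multiplying the interior equation by $\varphi$, integrating by parts on $(0,1)$, and inserting the dynamic boundary conditions to replace the fluxes $d(j)\partial_x\varphi(t,j)$ by $\partial_t\varphi(t,j)$ and $\varphi(t,j)$, the boundary contributions combine into $\frac{d}{dt}\tfrac12\sum_j|\varphi(t,j)|^2$, giving
\begin{equation*}
\frac{d}{dt}E(t)=2\int_0^1 d(x)|\partial_x\varphi|^2\,dx+2\int_0^1 a|\varphi|^2\,dx+2\sum_{j=0,1} b_j(t)|\varphi(t,j)|^2.
\end{equation*}
Since $d\geq M>0$ the first term is nonnegative, and $\|a\|_\infty,\|b_j\|_\infty\leq R$ give $\frac{d}{dt}E(t)\geq -2R\,E(t)$, so $t\mapsto e^{2Rt}E(t)$ is nondecreasing and hence $E(0)\leq e^{2Rt}E(t)$ for every $t\in[0,T]$. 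Integrating this inequality over $[T/4,3T/4]$ (which contributes a factor $\tfrac2T e^{3RT/2}\lesssim e^{C(T+1/T)}$) and combining with the intermediate estimate yields the backward observability inequality, with all constants absorbed into $Ce^{(T+1/T)}$.

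Finally, the forward estimate follows by time reversal: setting $\tilde y(t,\cdot)=y(T-t,\cdot)$ transforms the uncontrolled forward system \eqref{1.1} into a backward system of the form \eqref{2} with coefficients $a(T-\cdot)$ and $b_j(T-\cdot)$, which satisfy the same $L^\infty$ bounds, so the first part applies verbatim and returns the bound on $\|y(T,\cdot)\|_{L^2(0,1)}^2+\sum_j|y(T,j)|^2$. I expect the main obstacle to be the bookkeeping of the weight constants on $[T/4,3T/4]$ needed to produce precisely the $e^{(T+1/T)}$ dependence, together with the careful treatment of the boundary terms in the energy identity, where positivity of $d$ is essential to discard the gradient term with the correct sign.
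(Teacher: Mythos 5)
Your proposal is correct and follows essentially the same route as the paper: the paper's own "proof" is only a pointer to the Carleman estimate \eqref{Carleman} combined with the ideas of \cite{2} and \cite[Proposition 4.1]{3}, and those ideas are exactly what you carried out (drop the $L\varphi$, $K\varphi$ terms for adjoint solutions, bound the weights above and below on $[T/4,3T/4]$, propagate to $t=0$ via the energy dissipation inequality using $d\geq M>0$ and $\Vert a\Vert_\infty,\Vert b_j\Vert_\infty\leq R$, and time-reverse for the forward statement). The only cosmetic point is that your argument naturally produces a constant of the form $Ce^{C(T+1/T)}$ rather than the displayed $Ce^{(T+1/T)}$, which is immaterial here since $T$ is fixed in the proposition.
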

\begin{proof}
The proof mainly follows from the Carleman estimate \eqref{Carleman} with an adaptation of the same ideas in \cite{2} and \cite[Proposition 4.1]{3}. Due to the similarity of concepts, we omit the details.
\end{proof}
For the case of inhomogeneous one-dimensional heat equation, we introduce the weighted $L^2$-spaces
$$Z_{(0,1)}=\{f \in L^2(Q_T): e^{s \alpha} \xi^{-3/2} f \in L^2(Q_T) \},$$
provided with scalar product given by
$$\langle f_1, f_2 \rangle_{Z_{(0,1)}}= \int_{Q_T}  f_1 f_2 e^{-2s \alpha} \xi^{3} \, dxdt ,$$
and
$$Z_{j}=\{g_j \in : e^{s \alpha} \xi^{-3/2} g_j \in L^2(0,T) \}, $$
with the following scalar product 
$$\langle g^1_j, g^2_j \rangle_{Z_{j}}= \int_0^T  g^1_j g^2_j e^{-2s \alpha} \xi^{3} \, dt. $$
We note that those inhomogeneities are exponentially decaying at $t = 0$ and $t = T$. Then, we obtain the following result for which the proof can be derived by applying similar ideas of  \cite[Theorem 4.2]{3}. 
\begin{theorem}\label{thm2} 
For  each $T > 0$,  each nonempty open set $\omega \Subset (0,1)$ and  all initial data
$(y_0,\beta_{0}, \beta_{1})\in  \L^2$, $f \in Z_{(0,1)}$ and $g_j \in Z_{j}$,  there is a control $v\in L^2((0,T)\times \omega)$ 
such that the unique mild solution $\left(y , y(\cdot,0), y(\cdot,1)  \right)$ of the system \eqref{eq:system} satisfies $$\left(y(T,\cdot), y(T,0), y(T,1)  \right) =0 \quad \text{in} \quad(0,1).$$
\end{theorem}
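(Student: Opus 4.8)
The plan is to prove null controllability for the inhomogeneous system \eqref{eq:system} by the penalized Hilbert Uniqueness Method (HUM), exactly as in the homogeneous case treated in Theorem~\ref{thm:intro}, but now carrying the source terms $f$ and $g_j$ through the duality pairing. First I would fix $\varepsilon>0$ and introduce the penalized functional
\begin{equation*}
J_\varepsilon(\varphi_T,\gamma_0,\gamma_1)=\frac{1}{2}\int_{(0,T)\times\omega}\vert\varphi\vert^2\,dx\,dt+\frac{\varepsilon}{2}\Vert(\varphi_T,\gamma_0,\gamma_1)\Vert_{\L^2}^2+\langle(y_0,\beta_0,\beta_1),(\varphi(0,\cdot),\varphi(0,0),\varphi(0,1))\rangle_{\L^2}+\Lambda,
\end{equation*}
where $(\varphi,\varphi(\cdot,0),\varphi(\cdot,1))$ solves the backward adjoint problem \eqref{2} with final data $(\varphi_T,\gamma_0,\gamma_1)$, and the extra term $\Lambda$ collects the pairings of the inhomogeneities against the adjoint state, namely $\Lambda=\int_{Q_T}f\varphi\,dx\,dt+\sum_{j=0,1}\int_0^T g_j\varphi(t,j)\,dt$. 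This $J_\varepsilon$ is strictly convex, continuous and, thanks to the observability inequality in the Proposition above, coercive on $\L^2$; hence it admits a unique minimizer $(\varphi_T^\varepsilon,\gamma_0^\varepsilon,\gamma_1^\varepsilon)$, and the associated control $v_\varepsilon=\one_\omega\varphi^\varepsilon$ drives the solution to within $\varepsilon$ of zero at time $T$.

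The crux is then to pass to the limit $\varepsilon\to 0$. Here I would combine the uniform observability constant $Ce^{(T+1/T)}$ with the weighted Carleman estimate \eqref{Carleman} to obtain an $\varepsilon$-independent bound on $\Vert v_\varepsilon\Vert_{L^2((0,T)\times\omega)}$; the weighted spaces $Z_{(0,1)}$ and $Z_j$ are precisely what make the source contributions $\Lambda$ controllable, because the factors $e^{s\alpha}\xi^{-3/2}$ attached to $f$ and $g_j$ pair against the Carleman-weighted norms $s^3\lambda^4\int\xi^3 e^{-2s\alpha}\vert\varphi\vert^2$ of the adjoint state. Concretely, by Cauchy--Schwarz $\vert\Lambda\vert\le\Vert e^{s\alpha}\xi^{-3/2}f\Vert_{L^2(Q_T)}\,\big(\int_{Q_T}e^{-2s\alpha}\xi^3\vert\varphi\vert^2\big)^{1/2}+\cdots$, and the second factor is dominated by the left-hand side of \eqref{Carleman}, so the inhomogeneities never destroy coercivity. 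From the uniform bound I extract a weakly convergent subsequence $v_\varepsilon\rightharpoonup v$ in $L^2((0,T)\times\omega)$; by linearity and continuous dependence from part~(b) of the existence Proposition the corresponding states converge, and the residual $\Vert(y_\varepsilon(T,\cdot),y_\varepsilon(T,0),y_\varepsilon(T,1))\Vert_{\L^2}^2\le\varepsilon\,J_\varepsilon/\text{(const)}$ tends to zero, yielding the exact constraint $(y(T,\cdot),y(T,0),y(T,1))=0$.

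The main obstacle, and the step I would spend the most care on, is verifying that the source terms genuinely fit the Carleman-weighted duality so that the penalized functional stays coercive uniformly in $\varepsilon$; this is exactly where membership of $f$ in $Z_{(0,1)}$ and $g_j$ in $Z_j$ is indispensable, since a generic $L^2$ source would produce a boundary-layer blow-up at $t=T$ incompatible with reaching zero. A secondary technical point is the compatibility of the boundary pairings $\sum_j\int_0^T g_j\varphi(t,j)\,dt$ with the boundary Carleman terms $s^3\lambda^3\int\xi^3 e^{-2s\alpha}\vert\varphi(t,j)\vert^2$ appearing in \eqref{Carleman}; the estimate \eqref{Carleman} was designed to include precisely these dynamic-boundary contributions, so the argument closes in the same way on $Q_T$ and on $(0,T)\times\{0,1\}$. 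The remaining passages---strict convexity, weak lower semicontinuity, and identification of the weak limit---are routine adaptations of \cite[Theorem~4.2]{3}, which is why the detailed computation may reasonably be omitted.
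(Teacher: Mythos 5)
Your proposal is correct in substance, but it follows a genuinely different route from the paper's. The paper offers no self-contained argument here: it proves Theorem \ref{thm2} by invoking the method of \cite[Theorem 4.2]{3}, which is the Fursikov--Imanuvilov one-shot variational construction, not a penalized-HUM limit. In that construction one equips $\E_1$ with the bilinear form
\begin{align*}
b(\varphi,\psi)&=\int_{Q_T}e^{-2s\alpha}\,L\varphi\,L\psi\,dx\,dt+\sum_{j=0,1}\int_0^T e^{-2s\alpha}\,K\varphi(t,j)\,K\psi(t,j)\,dt\\
&\quad+s^3\lambda^4\int_{(0,T)\times\omega}e^{-2s\alpha}\xi^3\,\varphi\,\psi\,dx\,dt,
\end{align*}
which the Carleman estimate \eqref{Carleman} shows to be a scalar product; one completes $\E_1$ under $b$, checks that the linear form $\ell(\psi)=\int_{Q_T}f\psi\,dx\,dt+\sum_{j=0,1}\int_0^T g_j\psi(t,j)\,dt+\langle(y_0,\beta_0,\beta_1),(\psi(0,\cdot),\psi(0,0),\psi(0,1))\rangle_{\L^2}$ is bounded on the completion --- this is exactly where $f\in Z_{(0,1)}$ and $g_j\in Z_j$ enter, and it requires an observability-with-sources bound valid for arbitrary $\psi$, not only adjoint solutions --- and obtains $\hat\varphi$ by Riesz representation. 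The state and control are then read off in one stroke, $y=e^{-2s\alpha}L\hat\varphi$ (with boundary components $e^{-2s\alpha}K\hat\varphi(\cdot,j)$) and $v=-s^3\lambda^4e^{-2s\alpha}\xi^3\hat\varphi\,1_\omega$; the weight built into $y$ forces $y(T,\cdot)=0$ and yields weighted estimates on $(y,v)$, which is what makes this route the standard springboard for semilinear extensions. Your penalized-HUM route --- minimize $J_\varepsilon$, read off $\left(y_\varepsilon(T,\cdot),y_\varepsilon(T,0),y_\varepsilon(T,1)\right)=-\varepsilon\,(\varphi_T^\varepsilon,\gamma_0^\varepsilon,\gamma_1^\varepsilon)$ from the Euler--Lagrange equation, establish $\varepsilon$-uniform bounds, pass to a weak limit --- is equally viable, and your key observation is the right one: for homogeneous adjoint solutions $L\varphi^\varepsilon=0$ and $K\varphi^\varepsilon(\cdot,j)=0$, so \eqref{Carleman} bounds the weighted norms appearing in $\vert\Lambda\vert$ by the local term alone, and the pointwise bound $s^3\lambda^4\xi^3e^{-2s\alpha}\le C$ converts that into $C\Vert\varphi^\varepsilon\Vert^2_{L^2((0,T)\times\omega)}$; together with observability this gives $\Vert v_\varepsilon\Vert\le C$ and $\varepsilon\Vert(\varphi_T^\varepsilon,\gamma_0^\varepsilon,\gamma_1^\varepsilon)\Vert_{\L^2}^2\le C$, hence a vanishing residual. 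What your version buys is simplicity (only homogeneous adjoint solutions are ever tested, so no observability-with-sources inequality is needed) and consistency with the numerical scheme of Section \ref{sec6}; what the paper's version buys is directness (no $\varepsilon$-limit or subsequence extraction) and the weighted state/control estimates. Two small repairs to your write-up: for fixed $\varepsilon$, coercivity of $J_\varepsilon$ comes from the penalization term, not from observability (observability enters only the $\varepsilon$-uniform bounds); and since \eqref{Carleman} is stated for $\E_1$ solutions, you should approximate $\L^2$ final data by $\H^1$ data (parts (c)--(d) of the well-posedness proposition) before applying it to $\varphi^\varepsilon$.
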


\section{Numerical experiments}\label{sec6}
In the following section we shall present some numerical tests to illustrate our controllability result for the following one-dimensional heat equation with potential and dynamic boundary conditions 
\begin{empheq}[left = \empheqlbrace]{alignat=2}\label{eq18}
\begin{aligned}
&\partial_{t} y(t,x)-\partial_{xx}y(t,x)+ y(t,x)  =1_{\omega}v(t,x) && \, \text { in } Q_T, \\
&\partial_{t}y(t,0) - \partial_{x} y(t,0) =0 && \, \text{on } (0, T), \\
&\partial_{t}y(t,1) + \partial_{x} y(t,1) =0 && \, \text{on } (0, T), \\
&\left(y\left(0,\cdot\right),y\left(0,0\right),y\left(0,1\right)\right)=\left(y^0,c,d\right):=\vartheta^{0} && \,\text { in } (0,1),\\
\end{aligned}
\end{empheq}
where $T > 0$ is the final time,  $\omega\Subset (0,1)$ is a nonempty open subset, $\vartheta^{0}\in \L^2$ denotes the initial data,  $a \in L^\infty(Q_T)$ and $v \in L^{2}((0,T)\times \omega)$ is the control function. 


Let $\varepsilon>0$ be fixed, we define the cost functional $ J_{T,\varepsilon}: \mathbb{L}^{2} \rightarrow \mathbb{R}$ as follows:
		\begin{equation}
			J_{T,\varepsilon}(\varphi^{0})=  \frac{1}{2}\int_{0}^{T}\int_{\omega}\varphi^2(t,x)\,\,dx\, dt  + \frac{\varepsilon}{2}\|\Phi^0\|^2_{\mathbb{L}^2} +\big\langle \vartheta^{0} , \Phi (\cdot,T)\big\rangle_{\mathbb{L}^2},
		\end{equation}
where $\Phi :=(\varphi, \varphi(\cdot,0), \varphi(\cdot,1) )$ is the solution of the following homogeneous system
\begin{empheq}[left = \empheqlbrace]{alignat=2}
\begin{aligned}
&\partial_{t} \varphi(t,x)-\partial_{xx}\varphi(t,x)+ \varphi(t,x)  =0 && \, \text { in } Q_T, \\
&\partial_{t}\varphi(t,0) - \partial_{x} \varphi(t,0) =0 && \, \text { on } (0, T), \\
&\partial_{t}\varphi(t,1) + \partial_{x} \varphi(t,1) =0 && \, \text { on } (0, T), \\
&\left(\varphi\left(0,\cdot\right),\varphi\left(0,0\right),\varphi\left(0,1\right)\right)=\left(\varphi^0,c,d\right):=\Phi^{0} && \,\text { in } (0,1).\\
\end{aligned}
\end{empheq}
The numerical method used here is designed to  compute the HUM control  ( see, \cite{ Boutaayamou2021} for a detailed description). This will be done based on a penalized HUM approach combined with a Conjugate Gradient Algorithm, see  \cite{RGJLL}.

For the numerical resolution of systems with dynamic boundary conditions, we discretize only the space domain $[0,1]$ into a uniform grid $(x_j)_{j=0}^{N_x}$ of step $\rho =\frac{1}{N_x}$, and leave the time variable continuous.  

In the sequel, we will present some numerical experiments. For convenience, the system parameters are taken as follows
$$T=0.03, \quad  N_x=25, \quad \omega=(0.2, 0.8),$$
we consider the initial datum and the potential terms as
$$\vartheta^{0}(x) = 7 (1-x) \log(1+x^2), \,\, \text{in}\,\, [0,1] .$$
Let us set the regularization parameter $\varepsilon= \frac{1}{25^2}$, and stopping parameter $e_\mathcal{J}=10^{-6}$. The algorithm stops at iteration $n=70$.

Next, we plot the controlled and the uncontrolled solutions

\begin{figure}[H] 
\centering
\includegraphics[scale=0.5]{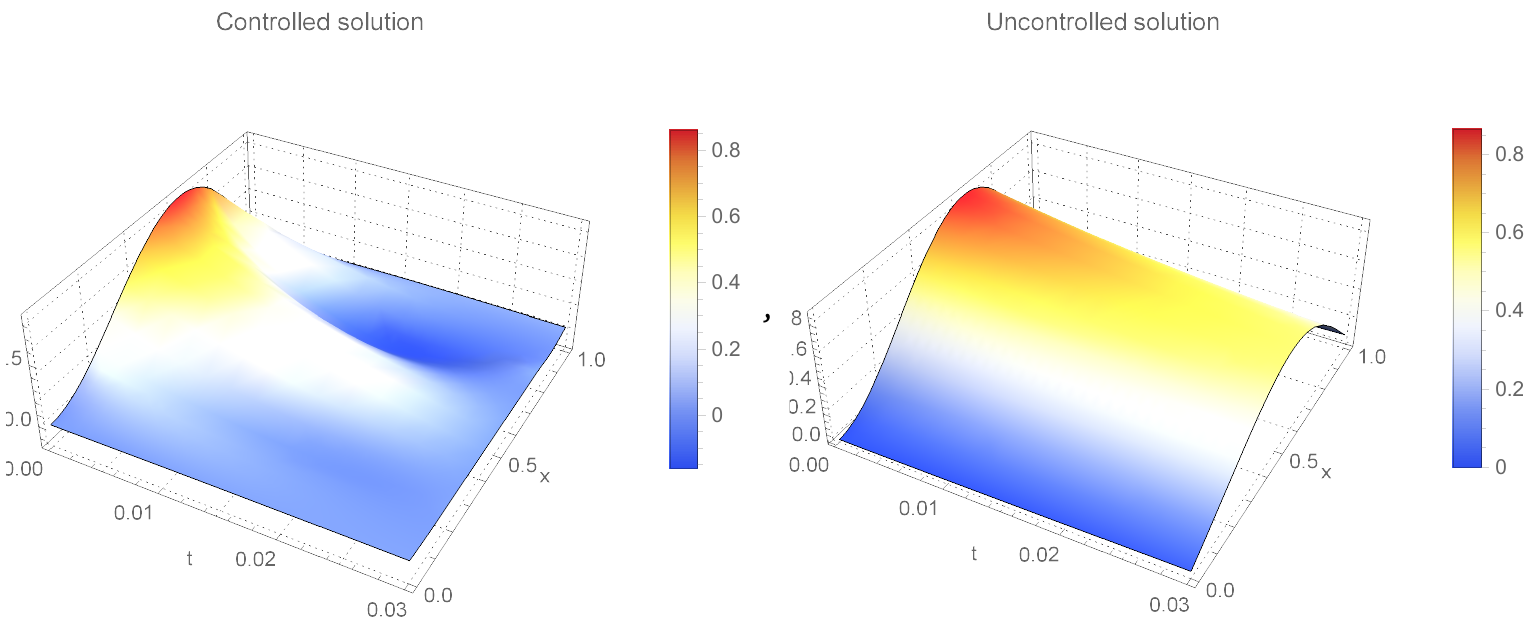}
\caption{The controlled and the uncontrolled solutions.} \label{ex1}
\end{figure}
\begin{figure}[H] 
\centering
\includegraphics[scale=0.5]{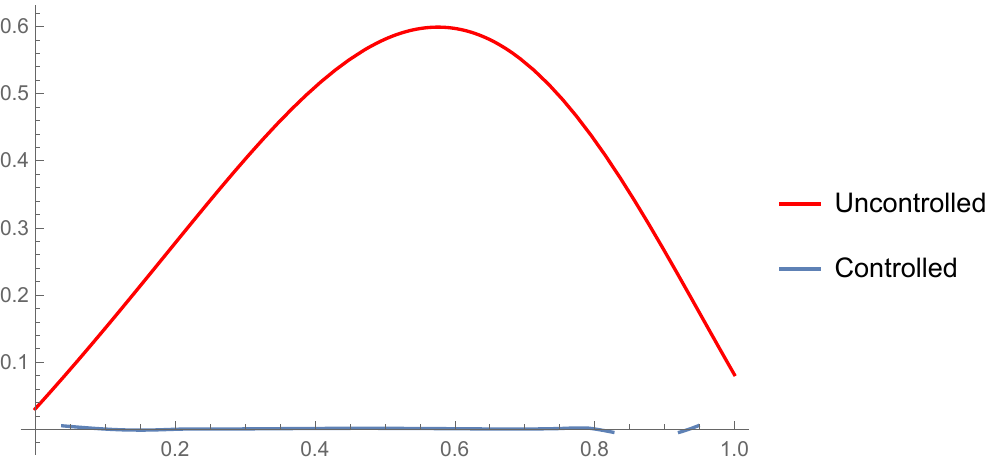}
\caption{The final state for controlled and uncontrolled solutions.} \label{ex11}
\end{figure}
\begin{figure}[H] 
\centering
\includegraphics[scale=0.5]{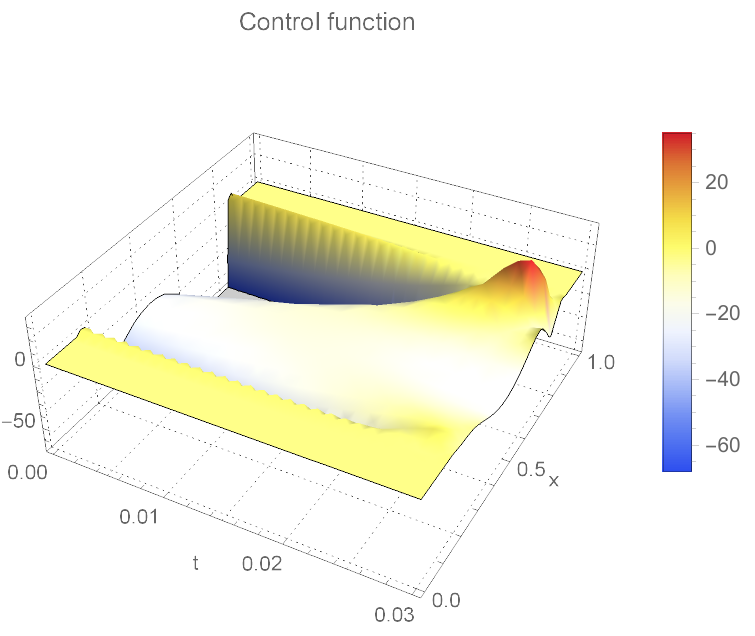}
\caption{The computed control $v$.} \label{ex111}
\end{figure}
\begin{table}[ht]
\caption{Numerical results for $T=0.03$ and $e_\mathcal{J}=\frac{1}{25^3}$}. \label{t1}
\centering\setlength\arraycolsep{13pt}
\begin{tabular}{cccc}
\hline\\[-3mm]
 $\epsilon$  & $10^{-2}$ & $10^{-4}$ & $10^{-6}$ \\
\hline\\[-3mm]
 $N_{\text{iter}}$ & $15$ & $45$ & $70$ \\
\hline\\[-3mm]
 $\|\vartheta(T)\|$ & $1.226\times 10^{-2}$ & $5.402\times 10^{-3}$ & $5.397\times 10^{-3}$ \\
\hline\\[-3mm]
 $\|v\|$ & $5.007$ & $5.047$ & $5.048$ \\
\hline\\[-3mm]
\end{tabular}
\end{table}

Based on the outcomes of the numerical experiment, we notice that the minimally distributed control we've developed exhibits high efficiency. It swiftly drives the solution of the heat equation with dynamic boundary conditions toward zero. Furthermore, we observe that the solution's norm decreases as the number of iterations increases, a behavior influenced by the reduction in the stopping criterion for the algorithm. This behavior is entirely expected and normal.
\begin{remark}
To simplify the numerical aspect, we have assumed that $d(\cdot) = 1$, $b_{j}(\cdot) = 0$, and $a(\cdot,\cdot) = 1$. However, it's noteworthy that the devised algorithm continues to converge even without these assumptions, albeit requiring more iterations.
\end{remark}
\section{Conclusion and Perspectives}
This study established an observability estimate for the one-dimensional heat equation with dynamic boundary conditions, subsequently applying it to obtain the internal null controllability of the system \eqref{1.1}. The proof is based on the Carleman approach, and a numerical simulation is carried out to visually validate the theoretical findings on the internal null controllability.

In the existing literature, the controllability of semi-linear systems with static boundary conditions has been extensively studied, see for instance \cite{LMS''15, HL''14}  and the references therein. But in the context of dynamic boundary conditions, there is little work available in this area, which opens the door to possible extension. Furthermore, this work opens up possibilities for generalization, including the examination of perturbations such as nonlinearities, delays, non-local conditions. Exploring these aspects would contribute significantly to our understanding of these complex problems.


\begin{thebibliography}{10}

\bibitem{CGMZ'23}
    \newblock  B. Allal, G.Fragnelli, and  J. Salhi, 
    \newblock Controllability for degenerate/singular parabolic systems involving memory terms,
    \newblock {\it Discrete Cont Dyn-S }, {\bf 15} (2022), 3445-3480.

\bibitem{Boutaayamou2021}
\newblock I. Boutaayamou, S. E. Chorfi, L. Maniar, \& O. Oukdach.
\newblock The cost of approximate controllability of heat equation with general dynamical boundary conditions.
\newblock {\it Port. Math.}, \textbf{78}, (2021), 65-99.


\bibitem{CGMZ'21}
    \newblock S. E. Chorfi, G. El. Guermai, L. Maniar and W. Zouhair,
    \newblock  Impulsive  null approximate controllability for heat equation with dynamic boundary conditions,
    \newblock \emph{Math. Control Relat. Fields}, \textbf{13} (2023), 1023-1046.


\bibitem{ABWZ}
    \newblock S. E. Chorfi, G. El. Guermai, L. Maniar and W. Zouhair,
    \newblock Finite-time stabilization and impulse control of heat equation with dynamic boundary conditions,
    \newblock {\it Dyn Control Syst. }, (2023), 1-31.

\bibitem{CGMZ'22}
    \newblock S. E. Chorfi, G. El. Guermai, L. Maniar and W. Zouhair,
    \newblock Logarithmic convexity and impulsive controllability for the one-dimensional heat equation with dynamic boundary conditions,
    \newblock {\it IMA J. Math. Control. Inf.}, {\bf 39} (2022), 861-891.
    
\bibitem{2}
\newblock E. Fernandez-Cara and S. Guerrero,
\newblock Global Carleman inequalities for parabolic systems and applications to controllability,
\newblock {\it SIAM J. Control Optim.} \textbf{45}, (2006)  1395-1446.


 \bibitem{RGJLL}
\newblock R. Glowinski, and J. L. Lions,
\newblock  Exact and approximate controllability for distributed parameter systems.,
\newblock \emph{Acta numerica}, \textbf{4} (1995), 159-328.



 \bibitem{LMS''15} 
 \newblock  H. Leiva, N. Merentes, J. Sanchez and A. Tineo M. \newblock  Approximate Controllability of Semilinear Non-Autonomous Systems in Hilbert Space, 
 \newblock {\it Advance in Dynamical Systems and Applications,} Vol. 10, {\bf 1}, pp. (2015), 57-75.


 \bibitem{HL''14}  
 \newblock H. Leiva.
\newblock Rothe's Fixed
\newblock Point Theorem and Controllability of Semilinear Nonautonomous Systems,  
 \newblock{\it System and Control Letters} {\bf 67} (2014), 14-18.




\bibitem{4}
\newblock A. Khoutaibi, L. Maniar, and   O. Oukdach,
\newblock Null controllability for semilinear heat equation with dynamic boundary conditions.
\newblock {\it  Discrete Cont Dyn-S,}  {\bf 15} (2022), 1525.


\bibitem{CGKM'23}
    \newblock M. Kumpf, and  G. Nickel,
    \newblock Dynamic boundary conditions and boundary control for the one-dimensional heat equation,
    \newblock {\it J Dyn Control Syst}, {\bf 10} (2004),  213-225.

\bibitem{Langer'1932}
    \newblock R. E. Langer,
    \newblock A problem in diffusion or in the flow of heat for a solid in contact with a fluid,
    \newblock {\it Tohoku Math. J.}, {\bf 35} (1932), 260-275.
    
\bibitem{Hintermann'89}
     \newblock  R. Lecaros, R. Morales,  A. Pérez, and  S. Zamorano,
      \newblock Discrete Carleman estimates and application to controllability for a fully-discrete parabolic operator with dynamic boundary conditions,
      \newblock {\it J Differ Equ}, \textbf{365} (2023), 832-881.

\bibitem{3}
\newblock  L. Maniar, M. Meyries, R. Schnaubelt,
\newblock Null controllability for parabolic equations with dynamic boundary conditions of reactive-diffusive type, 
\newblock  {\it Evol. Equat. and Cont. Theo.} {\bf 6}, (2017)  381-407.

\bibitem{MOW'23}
    \newblock  L. Maniar,  O. Oukdach, and W. Zouhair, 
    \newblock Lebeau–Robbiano Inequality for Heat Equation with Dynamic Boundary Conditions and Optimal Null Controllability,
    \newblock {\it Differ. Equ. Dyn. Syst.}, (2023),1-17.

\bibitem{KM'2020} 
     \newblock  G. Schimperna, A. Segatti, and  S. Zelik,
    \newblock On a singular heat equation with dynamic boundary conditions,
    \newblock {\it Asymptot. Anal.}, {\bf 97} (2016), 27-59. 

\bibitem{CGMZ''23}
    \newblock   J. L. Vázquez, and  E. Vitillaro, 
    \newblock Heat equation with dynamical boundary conditions of reactive type,
    \newblock {\it Commun. Partial. Differ. Equ.}, {\bf 33} (2008), 561-612.


 
\end{thebibliography}

\end{document}